

\documentclass{amsart}
\usepackage{ifthen,amsthm,amssymb,latexsym,amsmath}
\usepackage{mathrsfs}
\usepackage{graphicx}
\input xy
\xyoption{all}
\usepackage[all]{xy}

\def\C{\mathbb C}
\newcommand{\CC}{\mbox{${\mathbb C}$}}

\newcommand{\cod}{\text{\rm cod}}

\renewcommand{\dim}{\mathrm{dim}}

\newcommand{\E}{\mathcal{P}_{\mathscr F}}

\newcommand{\F}{\mathcal{T}_{\mathscr F}}
\newcommand{\fol}{\mbox{${\mathscr F}$}}

\newcommand{\G}{\mbox{${\mathscr G}$}}

\newcommand{\I}{\mbox{${\mathcal I}$}}

\newcommand{\lra}{\longrightarrow}

\newcommand{\nf}{\mathcal{N}_{\mathscr F}}

\renewcommand{\O}{{\mathcal O}}
\newcommand{\opn}{{\mathcal O}_{\mathbb{P}^n}}
\newcommand{\Oc}{\mbox{${\mathcal O}$}}

\newcommand{\pn}{\mathbb{P}^n}
\newcommand{\PP}{\mathbb{P}}

\newcommand{\rk}{\mathrm{rank}}

\newcommand{\singf}{{\rm Sing}({\mathscr F})}

\newcommand{\tpn}{\mathcal{T}_{\mathbb{P}^n}}
\newcommand{\tx}{\mathcal{T}_X}

\newtheorem{lema}{Lemma}[section]
\newtheorem{cor}[lema]{Corollary}
\newtheorem{teo}[lema]{Theorem}

\newtheorem{prop}[lema]{Proposition}
\newtheorem{mthm}{Theorem}
\newtheorem{mcor}[mthm]{Corollary}

\theoremstyle{definition}

\newtheorem{remark}[lema]{Remark}
\newtheorem{defi}[lema]{Definition}
\newtheorem{exe}[lema]{Example}

\begin{document}

\title{On the Singular Scheme of Split Foliations}


\begin{abstract}
We prove that the tangent sheaf of a codimension one locally free
distribution splits as a sum of line bundles if and only if its
singular scheme is arithmetically Cohen-Macaulay. In addition, we
show that a foliation by curves is given by an intersection of
generically transversal holomorphic distributions of codimension one
if and only if its singular scheme is arithmetically Buchsbaum.
Finally, we establish that these foliations are determined by their
singular schemes, and deduce that the Hilbert scheme of certain
arithmetically Buchsbaum schemes of codimension $2$ is birational to
a Grassmannian.
\end{abstract}

\author{Maur\'icio Corr\^ea Jr}
\address{Maur\'icio Corr\^ea Jr\\ ICEx - UFMG \\
Departamento de Matem\'atica \\
Av. Ant\^onio Carlos 6627 \\
30123-970 Belo Horizonte MG, Brazil } \email{mauricio@mat.ufmg.br}

\author{Marcos Jardim }
\address{Marcos Jardim\\ IMECC - UNICAMP // Rua S\'ergio Buarque de Holanda, 651, Campinas, SP, Brazil, CEP 13083-859}
\email{jardim@ime.unicamp.br}

\author{Renato Vidal Martins}
\address{Renato Vidal Martins \\  ICEx - UFMG \\
Departamento de Matem\'atica \\
Av. Ant\^onio Carlos 6627 \\
30123-970 Belo Horizonte MG, Brazil } \email{renato@mat.ufmg.br }

\thanks{ }

\subjclass{Primary 32S65, 37F75; secondary 14F05}

\keywords{Holomorphic foliations, reflexive sheaves, split vector
bundles}



\maketitle


\section{Introduction}

The goal of this paper is to characterize holomorphic distributions
on projective spaces either whose tangent sheaf splits as a sum of
line bundles, or which are globally defined as an intersection of
generically transversal distributions of codimension one. The last
case is equivalent to the splitting as a sum of line bundles of a
certain reflexive sheaf canonically defined from the foliation,
which we call \emph{Pfaff sheaf}.

Concerning split tangent sheaves, we start by recalling J. P.
Jouanolou's celebrated 1979 paper \cite{J}. It is doubtless a
landmark in the study of holomorphic foliations in projective
spaces. By means of algebro-geometric tools, he described, for
instance, the irreducible components of the space of
one-codimensional foliations of degree $0$ and $1$. New components
were found later on by Omegar \cite{Omegar} and D. Cerveau and A.
Lins Neto \cite{Cerveau-Lins-Neto}.

In such a study, it turned out to be helpful deciding when the
tangent sheaf of the foliation splits as a sum of line bundles. In
fact, F. Cukierman and J. V. Pereira proved in
\cite{Cukierman-Pereira} that the splitting conditon, along with
some properties for the singular locus, provide stability under
deformations and make it possible to characterize certain components
of the space of foliations.

More recently, L. Giraldo and A. J. Pan-Collantes showed in
\cite{GP} that the tangent sheaf of a foliation of dimension $2$ on
$\mathbb{P}^3$ splits if and only if its singular scheme is
arithmetically Cohen Macaulay (ACM). The first part of the present
article is devoted precisely to extend this result. More precisely,
we prove the following result.

\begin{mthm}\label{2}
Let $\fol$ be a distribution on $\PP^n$ of codimension $k$, such
that the tangent sheaf $\F$ is locally free and whose singular locus
has the expected dimension $n-k-1$. If $\F$ splits as a sum of line
bundles, then $\singf$ is arithmetically Cohen--Macaulay.
Conversely, if $k=1$ and $\singf$ is arithmetically Cohen--Macaulay,
then $\F$ splits as a sum of line bundles.
\end{mthm}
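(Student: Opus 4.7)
The strategy for both directions is to translate between cohomological properties of $\F$ and those of the ideal sheaf of $\singf$ via the defining exact sequence of the distribution, which in codimension one reads
\[
0 \longrightarrow \F \longrightarrow \mathcal{T}_{\PP^n} \longrightarrow \mathcal{I}_Z(e) \longrightarrow 0,
\]
where $Z = \singf$ has codimension two and $e \in \ZZ$ is determined by the degree of $\fol$; the expected-dimension hypothesis guarantees that the cokernel is indeed of the form $\mathcal{I}_Z(e)$. For general codimension $k$ the cokernel $\nf$ is no longer an ideal sheaf, but $\singf$ is still cut out by its Fitting ideals and the same cohomological machinery can be run on a locally free resolution of $\nf$.

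For the forward direction, suppose $\F$ splits as $\bigoplus \O_{\PP^n}(-d_i)$. Then $H^i(\F(\nu)) = 0$ for every $\nu$ and every $1 \leq i \leq n-1$, while the Euler sequence gives $H^i(\mathcal{T}_{\PP^n}(\nu)) = 0$ for $1 \leq i \leq n-2$. Feeding both vanishings into the long exact sequence obtained by twisting the defining sequence by $\O(\nu)$ yields $H^i(\mathcal{I}_Z(e+\nu)) = 0$ for $1 \leq i \leq n-2$ and every $\nu$, which is precisely the ACM criterion for a codimension-two subscheme of $\PP^n$. In higher codimension one iterates the same chase over a locally free resolution of $\nf$ and tracks the vanishing pattern through the associated spectral sequence.

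For the converse (with $k=1$), I would invoke Horrocks' splitting criterion: $\F$ is a sum of line bundles iff $H^i(\F(\nu)) = 0$ for all $\nu$ and all $1 \leq i \leq n-1$. Running the long exact sequence backwards, the ACM hypothesis immediately delivers $H^i(\F(\nu)) = 0$ in the interior range $2 \leq i \leq n-2$. The delicate cases are the boundary indices: for $i=1$ one needs surjectivity of $H^0(\mathcal{T}_{\PP^n}(\nu)) \to H^0(\mathcal{I}_Z(e+\nu))$, while for $i=n-1$ one needs injectivity of $H^{n-1}(\mathcal{T}_{\PP^n}(\nu)) \to H^{n-1}(\mathcal{I}_Z(e+\nu))$, with the only potentially obstructing twist being $\nu = -n-1$ by Bott.

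The main obstacle is therefore the handling of these two boundary indices. My plan is to exploit the Hilbert--Burch presentation $0 \to F_1 \to F_0 \to \mathcal{I}_Z \to 0$ with $F_0$ and $F_1$ direct sums of line bundles, which any ACM codimension-two subscheme automatically admits, and combine it with the defining sequence of $\fol$ through a horseshoe (or pullback) construction. This should produce a two-term resolution of $\F$ by split sheaves whose extension class lives in an $\mathrm{Ext}^1$ group that decomposes into $H^1$'s of twisted line bundles and of $\Omega^1_{\PP^n}$; Bott's vanishing should then force this class to be trivial and yield the decomposition of $\F$. Serre duality provides a parallel handle by exchanging the two boundary problems via the defining sequence of $\F^\vee$, so that controlling one side suffices to control the other.
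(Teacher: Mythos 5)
Your forward direction is essentially sound: for $k=1$ the cokernel of $\varphi$ is $\mathcal{I}_Z(d+2)$, and the long exact sequence plus Bott's formula gives $H^i_*(\mathcal{I}_Z)=0$ for $1\le i\le n-2=\dim Z$, which is the ACM criterion (this is even a little more direct than the paper, which runs the Eagon--Northcott complex of $\Omega^1_{\pn}\to\F^*$ uniformly in all codimensions -- and that complex is also the precise meaning of your ``locally free resolution attached to the Fitting ideals'' in the case $k\ge2$). The converse, however, has a genuine gap, and it sits exactly at the two boundary indices you flag. First, Horrocks is the wrong criterion to aim for: since $H^1_*(\tpn)=0$, the vanishing $H^1_*(\F)=0$ is \emph{equivalent} to surjectivity of $H^0(\tpn(\nu))\to H^0(\mathcal{I}_Z(d+2+\nu))$ for every $\nu$, and the ACM hypothesis gives no handle whatsoever on that map. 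The paper avoids the index $i=1$ altogether by invoking the stronger splitting criteria: for $n$ even, Kumar--Peterson--Rao needs only $H^i_*(\F)=0$ for $2\le i\le n-2$ (which ACM does give); for $n$ odd, it applies Evans--Griffith to $\F^*$ after Serre duality, which trades the $i=1$ problem for the $i=n-1$ problem.

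Second, your proposed mechanism for the boundary -- a horseshoe built from a Hilbert--Burch resolution $0\to F_1\to F_0\to\mathcal{I}_Z(d+2)\to0$, with ``Bott vanishing killing the extension class'' -- does not close. Writing $G$ for the fibre product of $\tpn$ and $F_0$ over $\mathcal{I}_Z(d+2)$, the extension $0\to F_1\to G\to\tpn\to0$ has class in $\mathrm{Ext}^1(\tpn,F_1)\cong\bigoplus_j H^1(\Omega^1_{\pn}(a_j))$, and Bott does \emph{not} kill this group: $H^1(\Omega^1_{\pn})=\C$. Worse, the class you actually need to vanish in order to split $\F$ off from $G$ lives in $\mathrm{Ext}^1(F_0,\F)\cong\bigoplus_j H^1(\F(-b_j))$, i.e.\ in the very groups you are trying to control. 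What survives every formal cohomology chase is the single group $H^{n-1}(\F(-n-1))$, the kernel of the map from the one-dimensional space $H^{n-1}(\tpn(-n-1))$ to $H^{n-1}(\mathcal{I}_Z(d+1-n))$; it can a priori be one-dimensional, and no vanishing theorem for split bundles or for $\Omega^p_{\pn}$ rules that out. The paper needs a genuinely different input here (its Lemma~5.2): a Beilinson spectral sequence argument showing that $h^{n-1}(\F(-n-1))=a$ forces $\mathrm{rank}(\F)\ge an$, which contradicts $\mathrm{rank}(\F)=n-1$ and hence forces $a=0$. Your proposal contains no substitute for this rank bound, so the odd-dimensional case -- the heart of the theorem for $n\ge5$ -- remains unproved.
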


Note that \cite[Thm 3.3]{GP} corresponds to the case $n=3$ and
$k=1$. We emphasize that when $n$ is odd and greater than $3$, a
technical difficulty arises, which we resolve via a spectral
sequence.

The \emph{Pffaf sheaf} of a distribution $\fol$ is defined to be the
dual of its normal sheaf $\nf$. The second part of this paper
concerns distributions whose Pfaff sheaves split as a sum of line
bundles. For distributions with pure dimensional singular locus,
this property is equivalent to saying that the distribution is an
intersection of generically transversal holomorphic distributions of
codimension one.

We say that $\fol$ is a \emph{complete intersection distribution} if
there exist generically transversal
 holomorphic distributions $\fol_1,\ldots,\fol_k$ of codimension one on $\pn$ such that $\fol=\fol_1\cap\dots\cap \fol_k$ is
 a distribution of codimension $k$ on $\pn$. In this case, the Pfaff sheaf of $\fol$ is a locally free sheaf of the form
$$
\E=\bigoplus_{i=1}^k\mathcal{O}_{\pn}(-2-d_i)
$$
where $d_i=\deg(\fol_i)\geq 0$. Then $\fol$ is induced by a $k$-form
$$
\omega \in H^0(\mathbb P^n,\Omega^k_{\mathbb P^n} \otimes
\det(\E^*))
$$
globally decomposable and $\deg(\fol)=d_1+\ldots +d_k+k-1$. In
particular,  $\deg(\fol)\geq k-1.$

Our strategy is to link this subject with arithmetically Buchsbaum
schemes, using a
 characterization of codimension two arithmetically Buchsbaum schemes due to M. C. Chang \cite{Ch}.
 The results we obtained are stated below.

\begin{mthm}\label{3}
Let $\fol$ be a holomorphic  distribution of dimension $r$ on $\pn$.
Suppose that
 $\cod(\singf)=r+1$ and that the induced  Pfaff system  $\E
\rightarrow \Omega_{\pn}^1$ is locally free. Then the following
hold:
\begin{itemize}
\item[(i)] if $r=1$ and $\singf$ is of pure dimension, then the following are equivalent:
\begin{itemize}
\item[$\bullet$]
$\E=\bigoplus_{i=1}^{n-1}\mathcal{O}_{\pn}(-2-d_i)$, with $d_i\geq
0$ for all $i=1,\ldots, n-1.$
\item[$\bullet$] $\singf$ is arithmetically Buchsbaum; $h^1(\mathcal{I}_Z(d-1))=1$; and this is
the only nonzero intermediate cohomology for $H^i_*(\mathcal{I}_Z)$
in the range $ 1\leq i\leq  n-2.$
\item[$\bullet$] $\fol$ is a complete intersection foliation.
\end{itemize}
Moreover, if $\singf$ is smooth and $d_i\geq n$ for some $i$, then
$\singf$ is of general type.
\item[(ii)] if $r=1$, and $\fol'$ is a foliation by curves on $\pn$,
with the same degree of $\fol$ such that $\mathrm{Sing}(\fol)\subset
\mathrm{Sing}(\fol')$, then $\fol'=\fol$.
\item[(iii)] if $r=2$ and $\E$ splits as a sum of line bundles, then $\singf$ is arithmetically Buchsbaum, but not arithmetically Cohen Macaulay;
\item[(iv)] if $r=3$, $\E$ splits as a sum of line bundles, the $|d_i-d_j| \neq 1$, and, for $n\geq 7$, the $d_i\neq 1$
as well, then $\singf$ is arithmetically Buchsbaum, but not
arithmetically Cohen Macaulay.
\end{itemize}
\end{mthm}

An interesting consequence is the construction of holomorphic
foliation  with  prescribed singular scheme  being arithmetically
Buchsbaum.

\begin{mcor}\label{existencia fol}
Let $Z$ be an  arithmetically Buchsbaum scheme of codimension $2$
such that $h^1(\mathcal{I}_Z(d-1))=1$ is the only nonzero
intermediate cohomology for $H^i_*(\mathcal{I}_Z)$ in the range $
1\leq i\leq  n-2.$ If $d\geq n-2$, then there exists an unique
one-dimensional foliation $\fol$, of degree $d$, such that
$\mathrm{Sing}(\fol)=Z$.

\end{mcor}

It is therefore worth pointing out that the latter topic appears
implicitly in works of classical algebro-geometers such as Fano,
Castelnuovo and  Palatini, in their study of varieties given by the
set of centers of complexes belonging to the linear systems of
linear complexes (see \cite{FF} and references therein).

We believe that an approach via distributions may clarify the study
of Hilbert schemes of such varieties. In fact, the Corollary
\ref{existencia fol} shows  that  there exists a one-to-one
correspondence between a class of these varieties and distribution
given by intersections of generically transversal holomorphic
distributions of codimension one.

For instance, let   $\mathcal{H}_{\ell}$ be    the union of
components of the Hilbert scheme of  $\mathbb P^n$ containing the
degeneracy locus of a general map of the form
$$
\zeta:\mathcal{O}_{\pn}(-\ell)^{\oplus (n-1)}\longrightarrow
\Omega^1_{\mathbb P^n},
$$
We have associated to $\zeta$ a globally decomposable $(n-1)$-form
$\omega_{\zeta}= \omega_1\wedge \cdots\wedge\omega_{n-1}$, where
$\omega_i\in H^0(\mathbb P^n,\Omega^1_{\mathbb P^n}(\ell)) $ for all
$i=1,\dots,n-1$. Thus, we get a natural rational map
$$
\begin{array}{ccc}
 \rho_{\ell}: \mathbf{G}(n-1,H^0(\mathbb P^n,\Omega^1_{\mathbb P^n}(\ell)) ) & \dashrightarrow & \mathcal{H}_\ell\\
 \\
 (\omega_{\zeta}=\omega_1\wedge \cdots\wedge\omega_{n-1}) & \mapsto & \mathrm{Sing}( \omega_{\zeta}) \\
\end{array}
$$
where $\mathbf{G}(n-1,H^0(\mathbb P^n,\Omega^1_{\mathbb P^n}(\ell))
)$ is the Grassmann variety   parametrizing $(n-1)$-dimensional
vector subspaces of $H^0(\mathbb P^n,\Omega^1_{\mathbb P^n}(\ell)).$
It follows form Corollary \ref{existencia fol} that $\rho_\ell$ is
generically injective. In \cite[Theorem, pg. 2 ]{FF} is showed the
birationality of $\rho$ in the case $\ell=2$.

\begin{mcor}
The map $\rho_\ell$ is generically injective, for all $\ell\geq 2$.
\end{mcor}

In the section $5$, we use the  part (ii) of Theorem \ref{3}   and
M. C. Chang's characterization of codimension two arithmetically
Buchsbaum to classify complete intersection foliations by curves
whose singular locus is smooth and non-general type.

 Finally, we give some characterizations for the splitting type for
 distributions with (co)tangent sheaf locally free and  globally generated.
\begin{mthm}\label{1}
Let $\fol$ be a distribution on $\PP^n$ of dimension $r$, degree
$d$, and such that the tangent sheaf $\F$ is locally free. Then the
following hold:
\begin{itemize}
\item[(i)] if $\F^{*}$ is globally generated and
ample, then $\singf$ is nonempty with pure dimension $r-1$.   This
holds in particular if
$\F=\oplus_{i=1}^{r}\mathcal{O}_{\PP^n}(-d_i)$ with $d_i>0$ for all
$i$. Moreover
$$
\deg(\singf)= \deg(c_{n-r+1}(\mathcal{T}_{\mathbb{P}^n}-\F))\leq
d^{n-r+1}+d^{n-r}+ \cdots +d+1.
$$
 \item [(ii)] if $\fol$ is a foliation, then $\F=\mathcal{O}_{\PP^n}(1-d)\oplus
\mathcal{O}_{\PP^n}(1)^{\oplus r-1}$ if and only if there exists a
generic linear projection $\pi: \PP^n\rightarrow\PP^{n-r+1} $ and a
one-dimensional foliation $\G$ on $\PP^{n-r+1}$, of degree $d$, with
isolated singularities, such that $\fol=\pi^*\G$.
 \item [(iii)] $\F=\mathcal{O}_{\mathbb{P}^n}(r-d)\oplus\mathcal{O}_{\mathbb{P}^n}^{\oplus
r-1}$ if and only if  $\F^*$ is globally generated and $\fol$ admits
a locally free subdistribution of rank and degree $r-1$. The last
condition can be dropped if $d=r+1$, and replaced by $c_r(\F)=0$ if
$d=r$;
\item[(iv)] if $d=r-2$ and $\F$ is globally generated and does not split, then  $\F$ is the tangent sheaf of the contact distribution
on $\mathbb{P}^3$.
\end{itemize}
\end{mthm}

\bigskip

\noindent {\bf Acknowledgements.} The present paper grew out of a
conversation between the second named author and Jorge Vitorio
Pereira regarding the paper \cite{GP}. MJ is partially supported by
the CNPq grant number 302477/2010-1 and the FAPESP grant number
2011/01071-3.


\section{Preliminaries}

We start by collecting relevant definitions and results from the
literature.


\subsection{The Eagon-Northcott resolution}\label{EN}

Let  $\mathcal{E}$  and $\mathcal{G}$ be locally free sheaves on $X$
of rank $e$ and $g$, respectively, and $\varphi:\mathcal{E}\to
\mathcal{G}$ a generically surjective morphism. The induced map
$\wedge^g\varphi: \bigwedge^g\mathcal{E} \to \det(\mathcal{G})$
corresponds to a global section  $\omega_{\varphi}\in H^0\big(X,
\bigwedge^g(\mathcal{E}^*)\otimes\det(\mathcal{G})\big)$.

\begin{defi}\label{def:deg}
The \emph{degeneracy scheme} $\mathrm{Sing}(\varphi)$ of the map
$\varphi:\mathcal{E}\to \mathcal{G}$ is the zero scheme of the
associated global section $\omega_{\varphi}\in
H^0\big(X,\bigwedge^g(\mathcal{E}^*)\otimes\det(\mathcal{G})\big)$.

Alternatively, $\omega_{\varphi}$ may also be regarded as a map
$\bigwedge^g\mathcal{E}\otimes\det(\mathcal{G})^* \to \O_X$; its
image is the ideal sheaf of $\mathrm{Sing}(\varphi)$.
\end{defi}

Suppose that $Z=\mathrm{Sing}(\varphi)$ has pure expected dimension,
i.e., $Z$ has pure codimension equal to $e-g+1$. Then the structure
sheaf of $Z$ admits a special resolution, called the
\emph{Eagon-Northcott resolution} (see for instance
\cite[A2.6]{Eisenbud}):

$$
0 \to \bigwedge^e \mathcal{E}\otimes S_{e-g}(\mathcal{G}^*) \otimes
\det(\mathcal{G}^*)\to \bigwedge^{e-1} \mathcal{E}\otimes
S_{e-g-1}(\mathcal{G}^*) \otimes \det(\mathcal{G}^*) \to \ldots
$$
$$
\to \bigwedge^{g+1}\mathcal{ E}\otimes \mathcal{G}^* \otimes
\det(\mathcal{G}^*) \to \bigwedge^{g} \mathcal{E} \otimes
\det(\mathcal{G}^*)  \to \mathcal{I}_{Z}\to 0.
$$

\subsection{Holomorphic Distributions and Pfaff systems }

For the remainder, $X$ is an irreducible, non-singular complex
scheme of dimension $n$. Its tangent sheaf is denoted by
$\mathcal{T}_X$, $\Omega^1_X$ denotes its sheaf of differentials,
and $\omega_X$ is its canonical bundle.

A \emph{(saturated) distribution $\fol$ of codimension $k$} on $X$
is a short exact sequence
\begin{equation} \label{equbas}
0 \lra \F \stackrel{\varphi}\lra \tx \stackrel{\varpi}\lra \nf \lra
0
\end{equation}
such that $\nf$, called the \emph{normal sheaf of} $\fol$, is a
nontrivial torsion free sheaf of rank $k$ on $X$. It follows that
the sheaf $\F$, called the \emph{tangent sheaf of} $\fol$, is a
reflexive sheaf. We may also refer to the \emph{dimension} of
$\fol$, which is defined as ${\rm rank}(\F)=n-k$.

A distribution $\fol$ is said to be \emph{locally free} if $\F$ is a
locally free sheaf, and one says that $\fol$ is a \emph{foliation}
if it is integrable, that is, the stalks of $\F$ are invariant under
Lie Bracket.

A dual perspective can have been considered. Indeed, a \emph{Pfaff
system $\mathcal{S}$ of codimension $k$} on $X$ is a short exact
sequence
\begin{equation} \label{equbaf}
0  \longrightarrow P_{\mathcal{S}} \stackrel{\phi}\longrightarrow
\Omega_X^1 \stackrel{\pi}\longrightarrow \Omega_{\mathcal{S}}
\longrightarrow  0
\end{equation}
such that $\Omega_{\mathcal{S}}$ is a nontrivial torsion free sheaf
of rank $n-k$ on $X$. It follows that $P_{\mathcal{S}}$ is a
reflexive sheaf of rank $k$.

Indeed, every distribution $\fol$ induces a Pfaff system
$\mathcal{S}(\fol)$ by duality as follows. Dualizing the sequence
(\ref{equbas}) one obtains the exact sequence
$$ 0 \to \nf^* \to \Omega^1_X \to \F^* \to \mathscr{E}xt^1(\nf,\mathcal{O}_{X}) \to 0 .$$
Breaking it into two short exact sequences we obtain
\begin{equation} \label{break1}
0  \longrightarrow \nf^* \longrightarrow \Omega_X^1 \longrightarrow
\Omega \longrightarrow  0
\end{equation}
and
\begin{equation} \label{break2}
0  \longrightarrow \Omega \longrightarrow \F^* \to
\mathscr{E}xt^1(\nf,\mathcal{O}_{X}) \to 0 .
\end{equation}
It is easy to see from sequence (\ref{break2}) that $\Omega$ is a
torsion free sheaf, thus sequence (\ref{break1}) is a Pfaff system.
We say that $\E:=\nf^*$ is the \emph{Pffaf sheaf} of the
distribution $\fol$. We say that the Pfaff system is \emph{ locally
free } if  $\E$ is locally free.

A third, closely related concept is that of a Pfaff field, cf.
\cite{EK}. A \emph{Pfaff field $\eta$ of rank $r$} is a nonzero map
$\eta:\Omega_X^r\to\mathcal{L}$; alternatively, since ${\rm
Hom}(\Omega_X^r,\mathcal{L})\simeq
H^0(\Omega^{n-r}_X\otimes\mathcal{L}\otimes\omega_X)$, $\eta$ can
also be regarded as a $(n-r)$-form $\omega_\eta$ with values on the
line bundle $\mathcal{L}\otimes\omega_X$.

The \emph{singular scheme ${\rm Sing}(\eta)$ of a Pfaff field
$\eta$} is the zero locus of $\omega_\eta$.
 Alternatively, $\omega_\eta$ may also be regarded as a map $\psi_\eta:\Omega^{n-r}_X\otimes\mathcal{L}^*\otimes\omega_X^*\to\mathcal{O}_X$,
 and ${\rm Sing}(\eta)$ is the subscheme of $X$ whose ideal is precisely the sheaf ${\rm im}(\psi_\eta)$.

Every distribution $\fol$ of dimension $r$ induces a Pfaff field
$\eta_{\fol}$ of rank $r$ in the following manner. Dualizing and
taking determinants of the map $\varphi:\F\to\tx$, we obtain a map:
\begin{equation}\label{eta_f}
\eta_{\fol}:\Lambda^r(\tx^*)\simeq\Omega^r_X\to\det(\F^*) .
\end{equation}
The \emph{singular scheme ${\rm Sing}(\fol)$ of a foliation $\fol$}
is defined to be the singular scheme of the corresponding Pfaff
field $\eta_{\fol}$.

Note that if $\F$ is locally free, ${\rm Sing}(\fol)$ coincides with
the degeneracy locus
$$ \Delta_\varphi := \{ x\in X ~|~ \varphi(x) ~~{\rm not~~injective} \} $$
of the map $\varphi:\F\to\tx$, thus ${\rm Sing}(\fol)$ is a
determinantal scheme. In this case, one also sees that ${\rm
Sing}(\fol)$ coincides with the singular locus of its normal sheaf,
i.e.
$$ {\rm Sing}(\fol) = {\rm Sing}(\nf) := \{ x\in X ~|~ (\nf)_x {\rm is~~not~~free}~~ \mathcal{O}_x{\rm -module} \}.$$

Every distribution $\fol$ of codimension $k$ can also be associated
to a $k$-form $\omega_{\fol}\in
H^0(\Omega^k_X\otimes\omega_X\otimes\det\F^*)$, which is the form
associated with the Pfaff field $\eta_{\fol}$. Note that
$\omega_{\fol}$ is a \emph{locally decomposable $k$-form} with
coefficients in $\omega_X\otimes\det\F^*$, i.e. for every point
$x\in X$, there are $1$-forms $\omega_1,\dots,\omega_k$ defined on
an open neighbourhood $U$ of $x$ such that
$\omega_{\fol}|_{U}=\omega_1\wedge\dots\wedge\omega_k$.

\bigskip

The sheaf $\F$ can be recovered as the kernel of the morphism
$$
\tx \lra \Omega_X^{k-1}\otimes \det \nf
$$
and we have the following diagram
$$
\xymatrix{
0\ar[r] &\F\ar[r]  & \tx\ar[d] \ar[r]& \Omega_{X}^{k-1}\otimes \det \nf & \\
&0 \ar[r]  & \nf \ar[d] \ar[ur] &&
\\
&&  0 && }
$$

\bigskip

Now, let  $\fol$ be a distribution. If the   Pfaff System $\phi:\E
\to\Omega^1_X$ induced by $\fol$ is locally free, then ${\rm
Sing}(\fol)$ coincides with the degeneracy locus
$$ \Delta_{\phi} := \{ x\in X ~|~ \phi(x) ~~{\rm not}~~{\rm injective} \} $$
of the map $\phi:\E \to\Omega^1_X$.

\bigskip

Although different from the way we opted to set up concepts here,
the reader can check
 for instance \cite[Sec. 3]{EK}, a very helpful source of how capturing the essence of this
 subject and put it within a general framework.






\begin{prop}\label{Sing local decomponivel}
Let $\mathcal{L}$ be a line bundle and $\omega\in
H^0(\Omega_{X}^{k}\otimes\mathcal{L})$ be a locally decomposable
section. Then $\cod(\mathrm{Sing}(\omega))\leq k+1$. In particular,
if $\fol$ is a distribution on $X$ of codimension $k$, then
$\cod(\mathrm{Sing}(\fol))\leq k+1$. Moreover, if $\nf$ is a $k$-th
syzygy sheaf, then $\cod(\singf)=k+1$.
\end{prop}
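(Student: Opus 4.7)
The plan is to realize $\mathrm{Sing}(\omega)$ as a determinantal subscheme, apply the classical Eagon--Northcott/Macaulay bound on heights of determinantal ideals, and then use a depth estimate for syzygy sheaves to get the matching lower bound.

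For the upper bound, I would work locally: write $\omega = \omega_1 \wedge \cdots \wedge \omega_k$ for 1-forms $\omega_i$. The saturated kernel of the contraction $\iota_{(\cdot)}\omega\colon \tx \to \Omega^{k-1}_X \otimes \mathcal{L}$ is a reflexive sheaf $\F$ of (generic) rank $n-k$, fitting in
$$
0 \to \F \to \tx \to \nf \to 0
$$
with $\nf$ torsion-free of rank $k$. The zero scheme of $\omega$ coincides with $\mathrm{Sing}(\nf)$, the non-locally-free locus of $\nf$. On the open set $U \subset X$ where $\F$ is locally free, a local trivialization of $\F$ together with $\tx \simeq \mathcal{O}_X^n$ presents $\nf|_U$ as the cokernel of an $n \times (n-k)$ matrix $M$, so $\mathrm{Sing}(\nf) \cap U$ is cut out by the ideal of maximal $(n-k) \times (n-k)$ minors of $M$. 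Macaulay's height bound for determinantal ideals gives, when this ideal is proper, height at most $(n - (n-k) + 1) \cdot 1 = k+1$. Because $\F$ is reflexive on the smooth $X$, the complement $X \setminus U$ has codimension $\geq 3$, so the bound extends globally. The ``in particular'' clause is then immediate, because the $k$-form $\omega_{\fol}$ attached to any codimension-$k$ distribution is locally decomposable by construction.

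For the matching lower bound under the syzygy hypothesis, I would invoke Evans--Griffith: a $k$-th syzygy sheaf $\nf$ satisfies Serre's condition $S_k$, so $\mathrm{depth}_{\mathcal{O}_{X,x}}(\nf_x) \geq \min(k, \dim \mathcal{O}_{X,x})$ for every $x \in X$. The Auslander--Buchsbaum formula then gives $\mathrm{pd}(\nf_x) = \dim \mathcal{O}_{X,x} - \mathrm{depth}(\nf_x)$, and at a point $x \in \singf = \mathrm{Sing}(\nf)$ the stalk fails to be free, so $\mathrm{pd}(\nf_x) \geq 1$. This forces $\dim \mathcal{O}_{X,x} \geq k+1$, whence $\cod_X(\singf) \geq k+1$; together with the upper bound this yields $\cod(\singf) = k+1$. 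The step I expect to be the most delicate is the scheme-theoretic identification $\mathrm{Sing}(\omega) = \mathrm{Sing}(\nf)$ and the propagation of the determinantal height bound from $U$ to all of $X$---both handled cleanly using that $\F$ is reflexive on a smooth ambient, so its non-locally-free locus is small enough to be harmless.
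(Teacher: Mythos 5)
Your lower\-/bound argument is correct: ``$k$-th syzygy $\Rightarrow$ Serre's condition $S_k$ $\Rightarrow$ depth estimate $\Rightarrow$ Auslander--Buchsbaum forces $\dim\mathcal{O}_{X,x}\geq k+1$ at every non-free point'' is in substance exactly the proof of the Okonek--Schneider--Spindler statement that the paper simply cites for this half. For the upper bound you take a genuinely different determinantal realization from the paper's. The paper stays on the conormal side: it writes $\omega|_U=\omega_1\wedge\dots\wedge\omega_k$ and reads $\mathrm{Sing}(\omega)\cap U$ off the $k\times n$ matrix of coefficients of the $\omega_i$. You instead pass to the kernel $\F$ of contraction and use the $n\times(n-k)$ presentation matrix $M$ of $\nf$. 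It is worth stressing that your realization is the one whose Eagon--Northcott/Macaulay bound actually produces the number in the statement: maximal minors of an $n\times(n-k)$ matrix give height at most $(n-(n-k)+1)\cdot 1=k+1$, whereas maximal minors of the paper's $k\times n$ matrix only give $n-k+1$.

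There is, however, a genuine gap at exactly the step you flag as delicate: the identification of $\mathrm{Sing}(\omega)$ (the zero scheme of $\omega$, i.e.\ the vanishing of the $k\times k$ minors of the matrix of the $\omega_i$) with the degeneracy scheme of $M$, together with the passage from $U$ to $X$. The ``$X\setminus U$ has codimension $\geq 3$'' argument does not close this, because $\mathrm{Sing}(\omega)$ can be contained entirely in the locus where $\F$ fails to be locally free, where your determinantal model says nothing. Concretely, take $k=1$ and $\omega=x_1\,dx_1+x_2\,dx_2+x_3\,dx_3$ on $X=\mathbb{C}^3$, or a general section of $\Omega^1_{\mathbb{P}^4}(2)$: in both cases $\omega$ is (trivially) locally decomposable, $\mathrm{Sing}(\omega)$ is a single point of codimension $3$ or $4$, exceeding $k+1=2$; the sheaf $\F$ is the non-free syzygy sheaf of a regular sequence, $U$ misses the singular point, and the degeneracy locus of $M$ over $U$ is empty. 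So your argument would conclude a bound that these examples violate. (The same examples show that the statement needs an extra hypothesis --- e.g.\ that $\F$ be locally free, in which case $\omega$ genuinely is the determinant of the $n\times(n-k)$ map $\varphi\colon\F\to\tx$ and your computation goes through verbatim; the paper's own proof, which bounds the codimension of the rank-drop locus of a $k\times n$ matrix by $k+1$ rather than $n-k+1$, founders on the same examples.) In short: you chose the right determinantal setup, but the reduction of an arbitrary locally decomposable $\omega$ to that setup is precisely where the difficulty, and as stated a counterexample, lives.
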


\begin{proof}
Since $\omega$ is locally decomposable, for each $p\in X$ there
exist germs of polynomial $1$-forms $\omega_1,\ldots,\omega_k$ on a
neighborhood $U$ of $p$, such that $\omega|_{U}=\omega_1
\wedge\ldots\wedge\omega_k$ and $\mathrm{Sing}(\omega)\cap
U=\{\omega_1\wedge\ldots\wedge\omega_k=0\}$. Therefore
$\mathrm{Sing}(\omega)\cap U$ is determinantal and has codimension
at most $k+1$.

Since a distribution $\fol$  of codimension $k$ on $X$ induces a
locally decomposable section $\omega\in
H^0(\Omega_{X}^{k}\otimes\det \nf)$ with
$\singf=\mathrm{Sing}(\omega)$, it immediately follows that
$\cod(\singf)\leq k+1$. On the other hand, if $\nf$ is a $k$-th
syzygy sheaf then $\cod(\singf)\geq k+1$ owing to \cite[Thm. 1.1.6,
p. 145]{Okonek}.
\end{proof}

\subsection{ACM and Arithmetically Buchsbaum schemes}
\label{acmsection}

A closed subscheme $Y\subset\pn$ is {\em arithmetically
Cohen--Macaulay (ACM)} if its homogeneous coordinate ring $S(Y) =
k[x_0,\dots,x_n]/I(Y)$ is a Cohen--Macaulay ring.

Equivalently, $Y$ is ACM if $H_*^p({\mathcal O}_Y)=0$ for $1\leq
p\leq\dim\,Y-1$ and $H_*^1({\mathcal I}_Y)=0$ (cf. \cite{CaH}). From
the long exact sequence of cohomology associated to the short exact
sequence
$$
0 \lra {\mathcal I}_Y \lra \mathcal{O}_{\pn} \lra \mathcal{O}_Y \lra
0
$$
one also deduces that $Y$ is ACM if and only if $H_*^p({\mathcal
I}_Y)=0$ for $1\leq p\leq\dim\,Y$.

Note that every ACM closed subscheme in $\pn$ is Cohen--Macaulay,
but the converse is not true.

Similarly, a closed subscheme in $\pn$ is {\em arithmetically
Buchsbaum} if its homogeneous coordinate ring is a Buchsbaum ring.
Clearly, every ACM scheme is arithmetically Buchsbaum, but the
converse is not true: the union of two disjoint lines is
arithmetically Buchsbaum, but not ACM.

We will use the following cohomological characterization of
arithmetically Buchsbaum schemes \cite{SV}, see also \cite{Ch0}.

\begin{prop}[St\"uckrad, Vogel] \label{arith Buchsbaum}
If $Y\subset\pn$ is closed subscheme such that
\begin{itemize}
\item[(i)] the multiplication map
$H^p(\I_Y(i))\stackrel{x}{\rightarrow}H^p(\I_Y(i+1))$ is zero for
every section $x\in H^0(\opn(1))$, $i\in\mathbb{Z}$ and $1\le p\le
\dim Y$;
\item[(ii)] $h^p(\I_Y(i)),h^q(\I_Y(j))\ne0$ for $1\le p<q\le \dim Y$, implies
$(p+i)-(q+j)\ne 1$;
\end{itemize}
then $Y$ is arithmetically Buchsbaum.
\end{prop}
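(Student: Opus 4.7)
The plan is to translate the cohomological hypotheses on $\I_Y$ into graded local cohomology statements about the homogeneous coordinate ring $S(Y)=S/I(Y)$, $S=k[x_0,\dots,x_n]$, and then invoke the algebraic Buchsbaum criterion by induction on hyperplane sections.

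First I would set up the local cohomology dictionary. From $0\to I(Y)\to S\to S(Y)\to 0$ together with the Cohen--Macaulayness of $S$ and the standard identification between graded local cohomology at $\mathfrak{m}=(x_0,\dots,x_n)$ and sheaf cohomology on $\pn$, one gets canonical isomorphisms
$$H^p_{\mathfrak{m}}(S(Y))_i\;\cong\;H^p(\pn,\I_Y(i))\qquad\text{for }1\le p\le\dim Y,$$
and multiplication by $x\in S_1$ on the left matches the action of $x\in H^0(\opn(1))$ on the right. Under these isomorphisms condition (i) becomes the \emph{quasi-Buchsbaum} property $\mathfrak{m}\cdot H^p_{\mathfrak{m}}(S(Y))=0$ for $1\le p<\dim S(Y)$, while condition (ii) becomes a degree-separation statement on the nonzero graded pieces of $H^\bullet_{\mathfrak{m}}(S(Y))$.

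Next I would invoke the hyperplane-section characterization: $S(Y)$ is Buchsbaum if and only if, for a general $\ell\in S_1$, $S(Y)/\ell S(Y)$ is again Buchsbaum and $\ell$ annihilates $H^p_{\mathfrak{m}}(S(Y))$ for $p<\dim S(Y)$. So the induction goes on $d=\dim Y$, the cases $d\le 1$ being immediate from quasi-Buchsbaumness. For the inductive step, since (i) kills $\ell$ on all relevant $H^p_{\mathfrak{m}}(S(Y))$, the long exact sequence attached to $0\to S(Y)(-1)\xrightarrow{\ell}S(Y)\to S(Y)/\ell S(Y)\to 0$ collapses to
$$0\to H^p_{\mathfrak{m}}(S(Y))_i\to H^p_{\mathfrak{m}}(S(Y)/\ell)_i\to H^{p+1}_{\mathfrak{m}}(S(Y))_{i-1}\to 0,$$
and conditions (i) and (ii) must be re-verified on these extensions for $Y\cap H\subset\mathbb{P}^{n-1}$.

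The main obstacle, and the whole point of condition (ii), is controlling the $\mathfrak{m}$-action on these extensions. Given a lift $\tilde\alpha\in H^p_{\mathfrak{m}}(S(Y)/\ell)_i$ of a class $\alpha\in H^{p+1}_{\mathfrak{m}}(S(Y))_{i-1}$, and any $\ell'\in S_1$, the product $\ell'\tilde\alpha$ maps to $\ell'\alpha=0$ in $H^{p+1}_{\mathfrak{m}}(S(Y))_i$, so it lies in $H^p_{\mathfrak{m}}(S(Y))_{i+1}$; quasi-Buchsbaumness of $S(Y)/\ell$ therefore demands $H^p_{\mathfrak{m}}(S(Y))_{i+1}=0$ whenever $H^{p+1}_{\mathfrak{m}}(S(Y))_{i-1}\ne 0$. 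But if both were nonzero, the indices $(p,i+1)$ and $(p+1,i-1)$ satisfy $(p+(i+1))-((p+1)+(i-1))=1$, which is precisely what (ii) forbids. The same bookkeeping preserves (ii) for the hyperplane section, closing the induction and giving that a generic linear system of parameters acts on $S(Y)$ as a standard weak sequence, which is the Buchsbaum condition.
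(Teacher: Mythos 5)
This proposition is stated in the paper without proof --- it is quoted from St\"uckrad--Vogel \cite{SV} (see also \cite{Ch0}) --- so there is no in-paper argument to compare against and your proposal has to stand on its own. Its first half does: the dictionary $H^p_{\mathfrak{m}}(S(Y))_i\cong H^p(\I_Y(i))$ for $1\le p\le\dim Y$, the reading of (i) as quasi-Buchsbaumness, and the use of (ii) to kill the induced maps $H^{p+1}_{\mathfrak{m}}(S(Y))_{i-1}\to H^p_{\mathfrak{m}}(S(Y))_{i+1}$ coming from the extension $0\to H^p_{\mathfrak{m}}(S(Y))_i\to H^p_{\mathfrak{m}}(S(Y)/\ell)_i\to H^{p+1}_{\mathfrak{m}}(S(Y))_{i-1}\to 0$ are all correct, and this is indeed the standard route.

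The gap is the final sentence: condition (ii) is \emph{not} inherited by the hyperplane section, so the induction does not close once $\dim Y\ge 3$. Encode the nonvanishing of $H^p$ by the set $D^p$ of ``diagonals'' $p+i$ with $h^p(\I_Y(i))\ne 0$; then (ii) says $D^p\cap(D^q+1)=\emptyset$ for $p<q$, while the section satisfies $D'^p\subseteq D^p\cup D^{p+1}$ (sub and quotient of the extension sit on the same diagonal). Verifying (ii) for the section with $q=p+1$ therefore requires, among other things, $D^{p+1}\cap(D^{p+1}+1)=\emptyset$, i.e.\ that $H^{p+1}(\I_Y(j))$ and $H^{p+1}(\I_Y(j+1))$ are never simultaneously nonzero; nothing in (i) or (ii) forbids this, since (ii) only constrains pairs in \emph{distinct} cohomological degrees $p<q$. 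Consequently, at the second restriction the group that is supposed to receive $\bar{\ell'}$ need not vanish, and one must instead show that the offending component of $\bar{\ell'}$ --- the one mapping the copy of $H^{p+1}_{\mathfrak m}(S(Y))_{j}$ inside $H^{p+1}_{\mathfrak m}(S(Y)/\ell)$ to the copy of $H^{p+1}_{\mathfrak m}(S(Y))_{j+1}$ inside $H^{p}_{\mathfrak m}(S(Y)/\ell)(1)$ --- is itself induced by multiplication by a linear form on $H^{p+1}_{\mathfrak m}(S(Y))$ and hence dies by (i). Establishing that compatibility for iterated sections is precisely the delicate content of the St\"uckrad--Vogel argument (which runs through their surjectivity criterion for Buchsbaumness) and cannot be dismissed as ``the same bookkeeping''. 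Two further points you assert without justification: the lifting statement ($M/\ell M$ Buchsbaum together with $\ell H^i_{\mathfrak m}(M)=0$ for $i<\dim M$ implies $M$ Buchsbaum) is a nontrivial theorem --- Buchsbaumness requires \emph{every} system of parameters to be weak, not a generic one --- and $S(Y)/\ell S(Y)$ is generally not saturated ($H^0_{\mathfrak m}(S(Y)/\ell S(Y))\cong H^1_{\mathfrak m}(S(Y))(-1)$), so the inductive hypothesis, phrased for subschemes of $\pn$, does not literally apply to it.
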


For two-codimensional subschemes, a more precise result is found in
\cite[p. 324]{Ch}.

\begin{teo}[Chang] \label {aB codim 2}
If $Y\subset\pn$ ($n\geq 3$) is a closed subscheme of codimension
$2$, then $Y$ is arithmetically Buchsbaum if and only if the ideal
sheaf $\I_Y$ admits a resolution of the form
\begin{equation}
\label{omega res} 0 \to \oplus_i
\opn(-a_i)\to(\oplus_j\Omega^{p_j}(-k_j)^{\oplus
l_j})\oplus(\oplus_s\opn(-c_s)) \to \I_Y \to 0,
\end{equation}
where $h^{p_j}(\mathcal{I}_Z(k_j))=l_j$ are  the  only nonzero
intermediate cohomologies  for $1\leq p_j \leq n-2$.
\end{teo}


\subsection{Splitting criteria for locally free sheaves on $\pn$}\label{criteria}

The well-known Horrocks splitting criterion says that a locally free
sheaf $\mathcal{F}$ on $\pn$ is a sum of line bundles if and only if
it has no intermediate cohomology, i.e., $H^p(\mathcal{F}(i))=0$ for
every $1\le p\le n-1$ and every $i\in\mathbb{Z}$.

However, there are stronger splitting criteria, due to G. Evans and
P. Griffith \cite{EG} and N. M. Kumar, C. Peterson and A. P. Rao
\cite[Thm 1]{KPR} which will be relevant here. For the convenience
of the reader, let us briefly revise them.

We say that a sheaf $\mathcal{F}$ on $X$ {\em splits} if it is a sum
of line bundles.

\begin{teo}[Evans, Griffith]
\label{EG} Let $\mathcal{F}$ be a locally free sheaf on $\pn$ of
rank $r \leq n$. Then $\mathcal{F}$ splits if and only if
$H_*^p(\mathcal{F}) = 0$ for $1 \le p \le r - 1$.
\end{teo}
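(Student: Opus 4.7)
The plan is to split the proof into the trivial direction and the substantive direction, reducing the latter to Horrocks's splitting criterion via the Evans--Griffith syzygy theorem on modules over the polynomial ring $S = k[x_0, \ldots, x_n]$. The ``only if'' direction is immediate, since $H^p(\opn(i)) = 0$ for $1 \le p \le n-1$ and every $i$, so any sum of line bundles satisfies the vanishing condition vacuously in the entire Horrocks range, hence certainly in the subrange $1 \le p \le r-1$.

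For the converse, the first step is to pass from the sheaf $\mathcal{F}$ to the finitely generated graded $S$-module of twisted sections $M := \bigoplus_{k \in \Z} H^0(\pn, \mathcal{F}(k))$, whose generic rank equals $r$. The standard local cohomology comparison $H^{p+1}_{\mathfrak m}(M) \simeq H_*^p(\mathcal{F})$ for $p \ge 1$, together with the vanishing of $H^0_{\mathfrak m}(M)$ and $H^1_{\mathfrak m}(M)$ forced by $\mathcal{F}$ being locally free and $M$ being its module of sections, converts the hypothesis $H_*^p(\mathcal{F}) = 0$ for $1 \le p \le r-1$ into the depth estimate $\operatorname{depth}_{\mathfrak m} M \ge r+1$. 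Combined with the fact that $M$ is locally free away from the irrelevant ideal (since $\mathcal{F}$ is locally free on $\pn$), this yields Serre's condition $S_{r+1}$ on $M$, which by the Auslander--Bridger/Evans--Griffith characterization of syzygy modules over regular rings realizes $M$ as an $(r+1)$-th syzygy.

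The conclusion then follows from the Evans--Griffith syzygy theorem itself: any non-free finitely generated graded $S$-module which is a $k$-th syzygy must have generic rank at least $k$. Applied to our $M$ with $k = r+1$ and $\operatorname{rank} M = r < r+1$, this forces $M$ to be free, and therefore $\mathcal{F} \simeq \widetilde{M}$ splits as a sum of line bundles, as desired. The main obstacle is the Evans--Griffith syzygy theorem itself, a deep commutative-algebra statement whose proof rests on the basic element theorem and on delicate homological properties of regular local rings; the sheaf-theoretic translation --- local cohomology versus intermediate sheaf cohomology, Auslander--Buchsbaum, and Serre's condition --- is routine once that algebraic input is granted, and it is precisely the rank bound on syzygies that shrinks the vanishing range from the Horrocks range $1 \le p \le n-1$ to the sharper range $1 \le p \le r-1$.
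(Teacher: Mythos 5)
Your proposal is correct, but note that the paper itself offers no proof of this statement: Theorem \ref{EG} is recalled in Section \ref{criteria} purely as a quoted result, with a citation to Evans--Griffith \cite{EG}, so there is no internal argument to compare against. What you have written is the standard derivation of the splitting criterion from the Evans--Griffith syzygy theorem, and all the steps check out: the ``only if'' direction is indeed vacuous since $H^p_*(\opn(a))=0$ for $1\le p\le n-1$ and $r-1\le n-1$; the passage to $M=\Gamma_*(\mathcal{F})$, the identifications $H^0_{\mathfrak{m}}(M)=H^1_{\mathfrak{m}}(M)=0$ and $H^{p+1}_{\mathfrak{m}}(M)\simeq H^p_*(\mathcal{F})$, and the resulting bound $\mathrm{depth}_{\mathfrak{m}}M\ge r+1$ are all standard; and since $M$ is free on the punctured spectrum, Serre's condition $S_{r+1}$ holds at every prime (the hypothesis $r\le n$ enters exactly here, to get $\min(r+1,n+1)=r+1$ at the irrelevant ideal), so $M$ is an $(r+1)$-st syzygy and the rank bound for non-free syzygies forces $M$ to be free. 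Two small points worth making explicit: the syzygy theorem you invoke is proved for rings containing a field, which is satisfied here since everything is over $\CC$; and one should note that $\Gamma_*(\mathcal{F})$ is finitely generated (true for a torsion-free coherent sheaf on $\pn$, $n\ge1$) before localizing at $\mathfrak{m}$ and applying the local statement. With the syzygy theorem granted as a black box --- which is reasonable, since it is the entire content of \cite{EG} --- your argument is complete.
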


\begin{teo}[Kumar, Peterson, Rao]
\label{KPR} Let $\mathcal{F}$ be a locally free sheaf on $\pn$ of
rank $r$. Then
\begin{itemize}
\item[(i)] if $n$ is even and $r\le n-1$, then $\mathcal{F}$  splits iff $H_*^p(\mathcal{F}) = 0$ for $2 \le p \le n-2$;
\item[(ii)] if $n$ is odd and $r\le n-2$, then $\mathcal{F}$ splits iff $H_*^p(\mathcal{F}) = 0$ for $2 \le p \le n-2$.
\end{itemize}
\end{teo}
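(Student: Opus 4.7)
The ``only if'' direction is immediate: a direct sum of line bundles $\bigoplus \opn(a_i)$ on $\pn$ has vanishing intermediate cohomology in the entire range $1 \le p \le n-1$, hence \emph{a fortiori} in the strict middle range $2 \le p \le n-2$.

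For the converse, the plan is to upgrade the middle-range hypothesis to the full Horrocks vanishing $H^p_*(\mathcal{F})=0$ for $1 \le p \le n-1$ by killing the two ``boundary'' cohomologies $H^1_*(\mathcal{F})$ and $H^{n-1}_*(\mathcal{F})$. Serre duality supplies a canonical isomorphism $H^{n-1}(\mathcal{F}(t)) \cong H^1(\mathcal{F}^\vee(-t-n-1))^\vee$, and $\mathcal{F}^\vee$ satisfies the same hypothesis with the same rank, so it is enough to prove that, under the given rank and parity restrictions, $H^1_*(\mathcal{F})=0$ for any locally free sheaf with vanishing cohomology in the middle range.

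To eliminate $H^1_*(\mathcal{F})$, I would use an iterated Serre construction. A nonzero class $\xi \in H^1(\mathcal{F}(k))$ gives a nonsplit extension
$$
0 \lra \mathcal{F}(k) \lra \mathcal{E}_\xi \lra \opn \lra 0,
$$
and iterating over a minimal set of generators of $H^1_*(\mathcal{F})$ as a module over $S=k[x_0,\dots,x_n]$ produces a locally free sheaf $\mathcal{G}$ fitting into
$$
0 \lra \bigoplus_i \opn(-a_i) \lra \mathcal{G} \lra \mathcal{F} \lra 0
$$
with $H^1_*(\mathcal{G})=0$, whose middle cohomology in the range $2 \le p \le n-2$ is inherited from $\mathcal{F}$. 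Provided the rank of $\mathcal{G}$ stays within the range where Theorem \ref{EG} applies, Evans--Griffith forces $\mathcal{G}$ to split as a sum of line bundles, and the line-bundle kernel on the left then splits off, so that $\mathcal{F}$ itself splits.

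The step I expect to be the main obstacle is controlling the rank of $\mathcal{G}$ after the universal extension and ensuring that one can reconstruct a splitting of $\mathcal{F}$ from that of $\mathcal{G}$. The parity dichotomy in the statement---$r \le n-1$ when $n$ is even versus $r \le n-2$ when $n$ is odd---presumably reflects a sharp cohomological bound on the number of summands needed to kill $H^1_*$ while keeping $\operatorname{rank}\mathcal{G}\le n$ (resp.\ $\le n-1$), together with the asymmetry introduced by applying the same argument to $\mathcal{F}^\vee$. Establishing this threshold, and in particular showing the Serre-type extension splits back off after $\mathcal{G}$ is decomposed, is the delicate computational heart of the argument and the one place where a naive induction on rank combined with Theorem \ref{EG} does not suffice.
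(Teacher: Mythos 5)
This theorem is not proved in the paper: it is quoted from Kumar--Peterson--Rao \cite[Thm 1]{KPR} as a known splitting criterion, so there is no internal proof to compare your attempt against; I can only measure it against the argument in that reference. Your reductions are fine as far as they go: the forward direction is trivial, and reducing via Serre duality to the claim that $H^1_*(\mathcal{F})=0$ (after which Horrocks finishes, since $\mathcal{F}^\vee$ has the same rank and inherits the middle-range vanishing) is legitimate. The universal extension
$$
0 \lra \bigoplus_i \opn(-a_i) \lra \mathcal{G} \lra \mathcal{F} \lra 0
$$
killing $H^1_*$ is also the right object to write down --- it is essentially half of the display of the minimal monad that \cite{KPR} uses. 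But the step you flag as ``the main obstacle'' is a genuine gap, and it is not a technical loose end: it is the entire content of the theorem. The rank of $\mathcal{G}$ equals $\mathrm{rank}(\mathcal{F})$ plus the number of minimal generators of the graded module $H^1_*(\mathcal{F})$, and there is no a priori bound on that number. Consequently there is no way to ``keep $\mathrm{rank}(\mathcal{G})$ within the range where Theorem \ref{EG} applies,'' and the hypotheses $r\le n-1$ (for $n$ even) and $r\le n-2$ (for $n$ odd) never enter your argument in any usable form; the parity dichotomy is not explained by your construction, only postulated.

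The proof in \cite{KPR} does not bound $\mathrm{rank}(\mathcal{G})$ at all. One forms the minimal monad $0\to A\to B\to C\to 0$ of $\mathcal{F}$, with $A$ and $C$ free and $B$ having no $H^1_*$ or $H^{n-1}_*$; the middle-range hypothesis then forces $B$ to have no intermediate cohomology whatsoever, hence to split by Horrocks. The rank hypotheses on $\mathcal{F}$ are exploited not through the splitting criterion of Evans--Griffith but through their \emph{syzygy theorem} (a $k$-th syzygy bundle that is not free has rank at least $k$), applied to the kernel and cokernel bundles of the monad and their duals: if $A$ or $C$ is nonzero, these auxiliary bundles are non-free syzygies of high order, which forces $\mathrm{rank}(\mathcal{F})$ to exceed the stated bounds, the parity of $n$ governing which syzygy order can be certified on each side. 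That rank-versus-syzygy-order contradiction is the mechanism missing from your proposal; without it, the argument does not get beyond the Horrocks and Evans--Griffith criteria already quoted in Section \ref{criteria}.
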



\subsection{Holomorphic distributions on projective spaces}

Our main concern in this work is about the holomorphic distributions
on the projective space $\pn$. In order to proceed, we need a better
descripton of them. So let $\fol$ be a codimension $k$ distribution
on $\mathbb P^n$ given by $\omega \in H^0(\mathbb
P^n,\Omega^k_{\mathbb P^n} \otimes \mathcal L)$.

If $i: \mathbb P^k \to \mathbb P^n$ is a general linear immersion
then $i^* \omega \in H^0(\mathbb P^k, \Omega^k_{\mathbb P^k} \otimes
\mathcal L)$ is a section of a line bundle, and its zero divisor
reflects the tangencies between $\fol$ and $i(\mathbb P^k)$. The
\emph{degree} of $\fol$ is, by definition, the degree of such
tangency divisor. Set $d:=\deg(\fol)$.

Since $\Omega^k_{\mathbb P^k}\otimes \mathcal L = \mathcal
O_{\mathbb P^k}( \deg(\mathcal L) - k - 1)$, one concludes that
$\mathcal L= \mathcal O_{\mathbb P^n}(d+ k + 1)$. Observe also that
$\bigwedge^k\nf=\I_Z \otimes \Oc_{\PP^n}(d+k+1)$ where $Z:=\singf$.
Besides, the Euler sequence implies that a section $\omega$ of
$\Omega^k_{\mathbb P^n} ( d + k + 1  )$ can be thought as a
polynomial $k$-form on $\C^{n+1}$ with  homogeneous coefficients of
degree $d + 1$, which we will still denote by $\omega$, satisfying
\begin{equation}
\label{equirw} i_R  \omega = 0
\end{equation}
where
$$
R=x_0 \frac{\partial}{\partial x_0} + \cdots + x_n
\frac{\partial}{\partial x_n}
$$
is the radial vector field. Thus the study of distributions of
degree $d$ on $\mathbb P^n$ reduces to the study of locally
decomposable homogeneous $k$-forms of degree $d+1$ on $\mathbb
C^{n+1}$ satisfying the relation (\ref{equirw}).


\section{split distributions on projetive spaces}\label{split-dist}

In this Section we prove Theorem \ref{2}, which generalizes a result
due to Giraldo and Pan-Collantes \cite{GP} in the case of
$3$-dimensional projective space.

Before moving to the main result of this section, we apply the
splitting criteria of Section \ref{criteria}, we obtain the
following characterization of split distributions in terms of its
normal sheaf.

\begin{prop}
Let $\fol$ be a locally free distribution of codimension $k$ on
$\PP^n$.
\begin{itemize}
\item[(i)] If $\F$ splits, then $H^p_*(\nf)=0$ for $1\leq p \leq n-2$.
\item[(ii)] If $n$ is even and $H^p_*(\nf)=0$ for $1\leq p \leq n-2$, then $\F$ splits.
\item[(iii)] If $n$ is odd, $k\ge2$ and $H^p_*(\nf)=0$ for $1\leq p \leq n-2$, then $\F$ splits.
\item[(iv)] If $n$ is odd, $k=1$ and $H^1_*(\F)=H^p_*(\nf)=0$ for $1\leq p \leq n-2$, then $\F$ splits.
\end{itemize} \end{prop}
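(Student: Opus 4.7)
The approach is to translate the hypotheses on the intermediate cohomology of the normal sheaf $\nf$ into hypotheses on the intermediate cohomology of the tangent sheaf $\F$, using the defining short exact sequence $0\to\F\to\tpn\to\nf\to 0$, and then invoke either Evans--Griffith (Theorem \ref{EG}) or Kumar--Peterson--Rao (Theorem \ref{KPR}) according to the parity of $n$ and the size of the codimension $k$.

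The key input is the vanishing $H^p(\tpn(\ell))=0$ for $1\le p\le n-2$ and every $\ell\in\Z$, which is a direct consequence of the Euler sequence. Twisting the defining sequence by $\opn(\ell)$ and taking the long exact sequence in cohomology therefore produces isomorphisms
$$H^p(\nf(\ell))\cong H^{p+1}(\F(\ell))\quad\text{for }1\le p\le n-3,$$
together with an injection $H^{n-2}(\nf(\ell))\hookrightarrow H^{n-1}(\F(\ell))$. For part (i), a split $\F$ has $H^q(\F(\ell))=0$ for every $1\le q\le n-1$ and every $\ell$, so the above comparisons immediately force $H^p_*(\nf)=0$ in the claimed range. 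For the converse parts (ii)--(iv), the hypothesis $H^p_*(\nf)=0$ for $1\le p\le n-2$ is turned, via the same comparisons, into the vanishing $H^q_*(\F)=0$ for $2\le q\le n-2$.

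It then remains to feed this vanishing into a splitting criterion applied to the rank $r=n-k$ bundle $\F$. In case (ii), with $n$ even, one has $r\le n-1$ and so Theorem \ref{KPR}(i) applies directly. In case (iii), with $n$ odd and $k\ge 2$, one has $r\le n-2$ and Theorem \ref{KPR}(ii) applies. The genuinely delicate case is (iv): when $n$ is odd and $k=1$, the rank is $n-1$, which is exactly the value for which Kumar--Peterson--Rao fails, forcing a fallback to the Evans--Griffith criterion. That criterion requires the stronger vanishing $H^p_*(\F)=0$ on the full range $1\le p\le r-1=n-2$, which is precisely why the extra hypothesis $H^1_*(\F)=0$ is needed in (iv): combined with the vanishing for $2\le q\le n-2$ obtained above, it supplies the entire Evans--Griffith hypothesis. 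The main obstacle is thus essentially bookkeeping: correctly matching the rank restriction of each splitting criterion to the codimension of $\fol$, and recognizing that rank $n-1$ on odd-dimensional $\pn$ is exactly the case that escapes Kumar--Peterson--Rao and demands the supplementary cohomological assumption.
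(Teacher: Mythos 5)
Your proposal is correct and follows essentially the same route as the paper: both use the long exact cohomology sequence of $0\to\F\to\tpn\to\nf\to 0$ together with the vanishing $H^p_*(\tpn)=0$ for $1\le p\le n-2$ to transfer cohomological vanishing between $\nf$ and $\F$, and then apply Kumar--Peterson--Rao in cases (ii) and (iii) and Evans--Griffith in case (iv). Your bookkeeping of which rank restriction triggers which criterion, including why the extra hypothesis $H^1_*(\F)=0$ is needed precisely when $n$ is odd and $k=1$, matches the paper's argument.
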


\begin{proof}
Suppose that $\F$ splits, then $H^{p}(\F(q))=0$ for $1\leq p\leq
n-1$ and all $q$. Consider, for each $q\in\mathbb{Z}$, the  exact
sequence
$$
0 \lra \mathcal \F(q) \lra \mathcal{T}_{\pn}(q) \lra \nf(q) \lra 0
$$
from which we get
$$
\ldots\lra H^p(\tpn(q))\lra H^{p}(\nf(q))\lra
H^{p+1}(\F(q))\lra\ldots
$$
From Bott's formula, $H^{p}_*(\tpn)=0$ for $1\leq p\leq n-2$. Hence
$H^p_*(\nf)=0$ for $1\leq p \leq n-2$.

Now, suppose that $H^p_*(\nf)=0$ for $1\leq p \leq n-2$. Taking the
long exact sequence of cohomology we have
$$
\ldots \lra H^{p-1}(\mathcal{N}(q))\lra H^p(\F(q))\lra
H^p(\tpn(q))\lra\ldots
$$
we conclude that $H^p_*(\F)=0$ for $2\leq p \leq n-2$. Since
$\rk(\F)\leq n-1$, item \emph{(ii)} follows from the first part of
Theorem \ref{KPR} above.

If $k\ge2$, then $\rk(\F)\leq n-2$, thus item \emph{(iii)} follows
from the second part of Theorem \ref{KPR}.

Finally, \emph{(iv)} follows from Theorem \ref{EG}.
\end{proof}

Next, we establish the first part of Theorem \ref{2}.

\begin{teo} \label{split=>acm}
Let $\fol$ be a locally free distribution on $\PP^n$ and whose
singular locus has the expected codimension $k+1$. If $\F$ splits
then $\singf$ is ACM.
\end{teo}

\begin{proof}
Let $r$, $k$, $d$ and $Z$ be, respectively, the rank, codimension,
degree and singular set of $\fol$. Consider the Eagon--Northcott
complex associated to the morphism
$\varphi^*:\Omega_{\pn}^1\to\F^*$:
\begin{equation} \label{encpx}
0 \lra \Omega^{n}_{\PP^n} \otimes
S_{k}(\F)(r-d)\stackrel{\alpha_k}\lra\Omega^{n-1}_{\PP^n} \otimes
S_{k-1}(\F)  (r-d) \stackrel{\alpha_{k-1}}\lra \ldots
\end{equation}
$$
\ldots\lra\Omega^{r+1}_{\PP^n}\otimes \F  (r-d)\stackrel{\alpha_1}
\lra \Omega^{r}_{\PP^n}  (r-d) \stackrel{\alpha_0}\lra
\mathcal{I}_{Z}\lra 0.
$$
Twist by $\mathcal{O}_{\pn}(q)$ and break it down into the short
exact sequences
\begin{gather*}
\begin{matrix}
  0 \lra  S_{k}(\F)(q-d-k-1) \lra
\Omega^{n-1}_{\PP^n} \otimes S_{k-1}(\F)  (r-d+q) \lra
\ker\alpha_{k-2}(q)
\lra 0  \\
   \vdots          \\
   0 \longrightarrow  \ker\alpha _{i}(q)  \longrightarrow
\Omega^{n-i}_{\PP^n} \otimes S_{k-i}(\F)  (r-d+q)\longrightarrow
\ker\alpha _{k-i-1}(q)  \longrightarrow
0  \\
   \vdots          \\
   0 \longrightarrow  \ker\alpha _{0}(q)  \longrightarrow
\Omega^{r}_{\PP^n}  (r-d+q)\longrightarrow  \mathcal{I}_{Z}(q)
\longrightarrow  0.
\end{matrix}
\end{gather*}
If $\F$ splits so does $S_{k}(\F)$ and hence $H^p_*(S_k(\F))=0$ for
$1\leq p\leq n-1$ by Horrocks splitting criterion. Therefore
$$
H^p(\ker\alpha_{k-2}(q))\simeq H^p(\Omega^{n-1}_{\PP^n} \otimes
S_{k-1}(\F)(r-d+q))
$$
for $1\leq p\leq n-2$ and all $q\in\mathbb{Z}$. But
$H^p_*(\Omega^{n-1}_{\PP^n} \otimes S_{k-1}(\F))=0$ for $1\leq p\leq
n-2$ by splitting $S_{k-1}(\F)$ and applying Bott's Formula term by
term. It follows that $H^p_*(\ker\alpha_{k-2})=0$ for $1\leq p\leq
n-2$. Using this in the next sequence and proceeding with the
argument we see that $H^p_*(\ker\alpha_{k-i})=0$ for $1\leq p\leq
n-i$. In particular, $H^p_*(\ker\alpha_{0})=0$ for $1\leq p\leq
n-k=r$. Thus, from the last sequence we get
$H^p_*(\mathcal{I}_{Z})=0$ for $1\leq p\leq r-1=\dim\,Z$, that is,
$Z$ is ACM, as desired.
\end{proof}

\section{Proof of Theorem \ref{2}}\label{proof thm 2}

In order to establish the second part of Theorem \ref{2}, we now
focus on one-codimensional distributions. We start with the
following result.

\begin{lema} \label{lemhum}
If $\fol$ is a distribution on $\pn$ with $\cod(\fol)=1$, then
\begin{itemize}
\item[(i)] $ H^0(\F(p))=0$ for $p\le-2$;
\item[(ii)] $H^1(\F(p))=0$ for $p\leq -\deg(\fol)-3$.
\end{itemize}
If, in addition, $\singf$ is ACM of codimension $2$ then
\begin{itemize}
\item[(iii)] $H^q(\F(p))=0$ for $2\le q\le n-2$ and all $p$ when $n\ge 4$;
\item[(iv)] $H^{n-1}(\F(p))=0$ for all $p\ne-n-1$, and $h^{n-1}(\F(-n-1))\le1$.
\end{itemize}
\end{lema}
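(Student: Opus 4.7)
The plan is to read off all four items from the defining exact sequence
$$0 \to \F \to \tpn \to \nf \to 0$$
after identifying the normal sheaf. Since $\cod(\fol)=1$, setting $d:=\deg(\fol)$ and $Z:=\singf$, we have $\nf = \mathcal{I}_Z(d+2)$, as recorded in Section~3. Twisting by $\opn(p)$ and taking cohomology, each $H^q(\F(p))$ sits between $H^{q-1}(\mathcal{I}_Z(d+p+2))$ and $H^q(\tpn(p))$; every item will then follow by showing that both flanking groups vanish (or are as small as claimed) in the appropriate range, combining Euler-sequence vanishings on $\tpn$ with the ACM hypothesis on $Z$ for (iii) and (iv).

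For (i) and (ii) the ACM hypothesis plays no role. Part (i) is immediate from $H^0(\F(p))\hookrightarrow H^0(\tpn(p))$ and the Euler-sequence vanishing $H^0(\tpn(p))=0$ for $p\le -2$. For (ii), $p\le -d-3$ forces $d+p+2\le -1$, so $H^0(\mathcal{I}_Z(d+p+2))\subset H^0(\opn(d+p+2))=0$; the Euler sequence together with $H^i(\opn(k))=0$ for $0<i<n$ then gives $H^1(\tpn(p))=0$ whenever $n\ge 3$.

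For (iii) and (iv) the ACM hypothesis takes over: since $\dim Z = n-2$, $H^i(\mathcal{I}_Z(k))=0$ for every $1 \le i \le n-2$ and every $k\in\Z$. In the range $2\le q\le n-2$ both $H^{q-1}(\mathcal{I}_Z(d+p+2))$ and (again by the Euler sequence) $H^q(\tpn(p))$ vanish, yielding (iii). For (iv), ACM kills $H^{n-2}(\mathcal{I}_Z(d+p+2))$, so $H^{n-1}(\F(p))\hookrightarrow H^{n-1}(\tpn(p))$, and the whole claim reduces to analysing this last group.

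The crux is therefore a sharp computation of $H^{n-1}(\tpn(p))$. From the Euler sequence together with $H^{n-1}(\opn(k))=0$ for $n\ge 2$, this group identifies with the kernel of the Koszul-type map $H^n(\opn(p)) \xrightarrow{(x_0,\ldots,x_n)} H^n(\opn(p+1))^{n+1}$. Serre duality presents the latter as the transpose of the multiplication map $H^0(\opn(-p-n-2))^{n+1} \to H^0(\opn(-p-n-1))$, $(f_0,\ldots,f_n) \mapsto \sum_{i=0}^n x_i f_i$, which is surjective in every degree except $-p-n-1=0$, where its cokernel is one-dimensional. Consequently $H^{n-1}(\tpn(p))=0$ for $p\ne -n-1$ and $h^{n-1}(\tpn(-n-1))=1$, which delivers (iv). The only genuinely delicate point is this residual one-dimensional jump at $p=-n-1$; everything else reduces to routine dimension-counting via Bott/Euler.
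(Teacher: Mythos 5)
Your proof is correct and follows essentially the same route as the paper: twist the defining sequence $0\to\F\to\tpn\to\mathcal{I}_Z(d+2)\to 0$ and read each $H^q(\F(p))$ off between $H^{q-1}(\mathcal{I}_Z(d+2+p))$ and $H^q(\tpn(p))$, invoking the ACM hypothesis for (iii) and (iv). The only difference is cosmetic: where the paper quotes Bott's formula for $H^{n-1}(\tpn(p))$, you rederive it from the Euler sequence and Serre duality, which is a fine (and slightly more self-contained) way to handle the one-dimensional jump at $p=-n-1$.
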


\begin{proof}
Set $d:=\deg(\fol)$ and $Z:=\singf$. Consider the sequence
\begin{equation} \label{equen}
0 \lra \F \lra \tpn \lra \mathcal{I}_Z(d+2) \lra 0.
\end{equation}
Twisting it by $\opn(p)$ and passing to cohomology, we first have
that
$$ 0 \lra H^0(\F(p)) \lra H^0(\tpn(p)) \lra \cdots $$
Since $H^0(\tpn(p))=0$ for $p\le-2$, we also conclude that
$H^0(\F(p))=0$ for $p\leq -2$.

We also have the sequence
$$ H^0(\mathcal{I}_Z(d+2+p)) \lra H^1(\F(p)) \lra H^1(\tpn(p)). $$
Since $H^0(\mathcal{I}_Z(l))=0$ for $l\leq-1$, we conclude that
$H^1(\F(p))=0$ for $p\leq-d-3$.

For the second part of the Lemma, if $Z$ is ACM then
$H^q(\mathcal{I}_Z(d+2+p)=0$ for $1\le q\le n-2=\dim Z$ and all $p$.
Since $H^q(\tpn(p))=0$ for $1\le q\le n-2$, one concludes from the
sequence in cohomology
$$ H^{q-1}(\mathcal{I}_Z(d+2+p)) \lra H^q(\F(p)) \lra H^q(\tpn(p)) $$
that $H^q(F(p))=0$ for $2\le q\le n-2$ and all $p$.

Moreover, one also has the sequence
$$ 0 \to H^{n-1}(\F(p)) \to H^{n-1}(\tpn(p)) \to \cdots $$
from which one obtain item (iv), since $H^{n-1}(\tpn(p))=0$ for all
$p\ne-n-1$, and $h^{n-1}(\tpn(-n-1))=1$.
\end{proof}

\subsection{Proof of even dimensional case }

Now let $\fol$ be a locally free distribution of codimension one on
an even dimensional projective space $\PP^{2m}$. If $\singf$ is ACM,
then $\F$ is a locally free sheaf  of rank $2m-1$ satisfying
$H^q(\F(p))=0$ for $2\le q\le 2m-2$ and all $p$, by item (iii) of
Lemma \ref{lemhum} above. By the Kumar--Peterson--Rao splitting
criterion for even dimensional projective spaces, Theorem \ref{KPR},
it follows that $\F$ must split.

\subsection{Proof of odd dimensional case }

For odd dimensional projective spaces the situation is more
delicate, and we will need the following technical result.

\begin{lema} \label{B}
Let $\mathcal{F}$ be a coherent sheaf on $\pn$ ($n\ge4$) such that
\begin{itemize}
\item[(i)] $H^0(F(s))=0$ for $s\le -2$;
\item[(ii)] $H^q(F(s))=0$ for $-n-2\le s\le -2$ and $2\le q\le n-2$;
\item[(iii)] $H^{n-1}(F(s))=0$ for $s=-n-2$ and $-n\le s\le -2$.
\end{itemize}
If $h^{n-1}(\mathcal{F}(-n-1))=a$, then ${\rm rank}(\mathcal{F})\ge
a\cdot n$.
\end{lema}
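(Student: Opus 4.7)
The plan is to apply Beilinson's spectral sequence to $\mathcal{G}:=\mathcal{F}(-2)$:
\[
E_1^{p,q} \;=\; H^q(\mathcal{G}(p))\otimes \Omega^{-p}_{\pn}(-p)\;\Longrightarrow\;\mathcal{G}, \qquad -n\le p\le 0,\ 0\le q\le n,
\]
converging to $\mathcal{G}$ in total degree zero and to zero in every other total degree. Combining hypotheses (i)--(iii) makes the $E_1$-page very sparse: (i) kills the row $q=0$ (since $p-2\le -2$); (ii) kills the rows $q=2,\dots,n-2$ identically (since $p-2\in[-n-2,-2]$); and (iii) kills row $q=n-1$ at every column except $p=-n+1$, where the surviving entry is
\[
E_1^{-n+1,\,n-1}\;=\;\mathbb{C}^{a}\otimes \Omega^{n-1}_{\pn}(n-1)\;\simeq\;\mathbb{C}^{a}\otimes \tpn\!(-2),
\]
a locally free sheaf of rank $an$, using the isomorphism $\Omega^{n-1}_{\pn}\simeq\tpn\otimes \omega_{\pn}$. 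Only the rows $q=1$ and $q=n$ may contain further nonzero entries.

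Next I would trace the differentials out of the key position. For $d_r\colon E_r^{-n+1,n-1}\to E_r^{-n+1+r,\,n-r}$ the target sits in row $q=n-r$; when $1\le r\le n-2$ this row lies in the vanishing part of the page, and when $r=n$ the target column $p=1$ lies outside the bounded range. Thus the only potentially nonzero outgoing differential is $d_{n-1}\colon E_{n-1}^{-n+1,n-1}\to E_{n-1}^{0,1}$; the symmetric analysis shows that every incoming differential vanishes as well. Hence $E_\infty^{-n+1,n-1}=\ker(d_{n-1})$, and since $\mathcal{G}$ is filtered with associated graded $\bigoplus_{p+q=0}E_\infty^{p,q}$, we conclude
\[
\mathrm{rank}(\mathcal{F})\;=\;\mathrm{rank}(\mathcal{G})\;\ge\;\mathrm{rank}\bigl(\ker d_{n-1}\bigr).
\]

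The main obstacle, and the crux of the proof, is to show that $d_{n-1}=0$. Under the BGG/Koszul description of Beilinson's spectral sequence (obtained from the Koszul resolution of the diagonal $\Delta\subset\pn\times\pn$), the higher differentials $d_r$ are realised by iterated cup products in the graded $\mathrm{Sym}(V^{*})$-module $\bigoplus_s H^{*}(\mathcal{F}(s))$; a nonzero image of $d_{n-1}$ acting on a class in $H^{n-1}(\mathcal{F}(-n-1))$ would arise, via a Massey-type product, from nonzero intermediate cohomology groups $H^{q}(\mathcal{F}(s))$ with $2\le q\le n-2$ and $s$ in a suitable subrange of $[-n-1,-2]$, all of which are killed by hypothesis (ii). Once $d_{n-1}=0$, the key entry survives to $E_\infty$ with rank $an$, giving $\mathrm{rank}(\mathcal{F})\ge an$. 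An alternative route, avoiding the spectral sequence, would be to construct a map $\mathcal{F}\to\tpn^{\oplus a}$ from the Euler-sequence boundary surjection $\mathrm{Hom}(\mathcal{F},\tpn)\twoheadrightarrow V^{*}$ (surjective because $\mathrm{Ext}^1(\mathcal{F},\opn(1))\cong H^{n-1}(\mathcal{F}(-n-2))^{*}=0$ by (iii)) and then to argue via a further Ext-analysis that this map is generically surjective, ruling out proper torsion-free quotients of $\tpn^{\oplus a}$.
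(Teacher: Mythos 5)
Your setup coincides with the paper's own proof: both apply Beilinson's spectral sequence to $\mathcal{F}(-2)$, obtain the same sparse $E_1$-page, locate the term $E_1^{1-n,n-1}\simeq\Omega^{n-1}(n-1)^{\oplus a}$ of rank $an$ on the antidiagonal $p+q=0$, and read the rank bound off the associated graded sheaf of $\mathcal{F}(-2)$. You have also correctly isolated the one differential that threatens this term, namely $d_{n-1}\colon E_{n-1}^{1-n,n-1}\to E_{n-1}^{0,1}$ (the paper passes over this point very quickly, asserting degeneration at $E_2$ after checking only the $d_2$'s). So the approach is the right one; the problem is that you do not actually close the step you yourself call the crux.

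The argument you offer for $d_{n-1}=0$ rests on a false principle. In a filtered-complex spectral sequence, the vanishing of the intermediate entries $E_1^{1-n+i,\,n-i}$ for $1\le i\le n-2$ (which is what hypotheses (ii)--(iii) give you) is precisely what guarantees that a class in row $n-1$ \emph{survives} to the page $E_{n-1}$ on which $d_{n-1}$ is defined; it imposes no constraint on the value of $d_{n-1}$ itself. Transgression-type differentials between two widely separated nonzero rows are typically nonzero exactly in this situation, and the higher components of the Tate/BGG differential are extra data not determined by the cohomology modules alone. Nor is there a cheap sheaf-theoretic obstruction: $E_{n-1}^{0,1}$ is a quotient of $E_2^{0,1}=\mathcal{H}^0(\mathcal{C}^\bullet)$, itself a quotient of $H^1(\mathcal{F}(-2))\otimes\opn$, and since $\Omega^{n-1}(n-1)\simeq\tpn(-2)$ one has $\Hom(\Omega^{n-1}(n-1),\opn)\cong H^0(\Omega^1_{\pn}(2))\neq 0$, so nonzero maps out of your surviving term into such a sheaf do exist. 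To finish you must genuinely kill the target or the map, for instance by proving $\mathcal{H}^0(\mathcal{C}^\bullet)=0$, i.e.\ that the $d_1$-map $H^1(\mathcal{F}(-3))\otimes\Omega^1_{\pn}(1)\to H^1(\mathcal{F}(-2))\otimes\opn$ is surjective as a map of sheaves, or by an explicit identification of $d_{n-1}$; hypotheses (i)--(iii) alone do not hand you this. The alternative route via a map $\mathcal{F}\to\tpn^{\oplus a}$ has the same status: the surjectivity of $\Hom(\mathcal{F},\tpn)\to V^{*}$ is plausible, but the generic surjectivity of the resulting sheaf map, which is the entire content of the rank bound, is asserted rather than proved.
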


\begin{proof}
By a theorem of Beilinson \cite[Thm 3.1.3, p. 240]{Okonek}, for any
coherent sheaf $E$ on $\pn$ there is a spectral sequence with
$E_1$-term
$$ E_1^{p,q} = H^q(E(p))\otimes\Omega^{-p}(-p)
~~{\rm where}~~ 0\le q\le n ~~{\rm and}~~ -n\le p\le 0 $$ which
converges to the graded sheaf associated to a filtration of $E$.

Applying this to $E:=\mathcal{F}(-2)$, we get that
\begin{itemize}
\item[(i)] $E_1^{p,0}=0$ for $-n\le p\le0$;
\item[(ii)] $E_1^{p,q}=0$ for $-n\le p\le0$ and $0\le q\le n-2$;
\item[(iii)] $E_1^{p,n-1}=0$ for $p\ne 1-n$.
\end{itemize}
The only nontrivial $E_1$-terms are:
\begin{itemize}
\item[(i)] $E_1^{p,1}=H^1(\mathcal{F}(p-2))\otimes\Omega^{-p}(-p)$;
\item[(ii)] $E_1^{1-n,n-1}:=H^{n-1}(\mathcal{F}(-n-1))\otimes\Omega^{n-1}(n-1)\simeq\Omega^{n-1}(n-1)^{\oplus a}$;
\item[(iii)] $E^{p,n}=H^n(\mathcal{F}(p-2))\otimes\Omega^{-p}(-p)$.
\end{itemize}

The $E_1$-terms $E_1^{p,1}$ and $E_1^{p,n}$, $-n\le p\le 0$,
together with the $d_1$-differentials $d_1^{p,1}:E_1^{p,1}\to
E_1^{p+1,1}$ and $d_1^{p,n}:E_1^{p,n}\to E_1^{p+1,n}$, respectively,
form complexes, which we denote by $\mathcal C^\bullet$ and
$\mathcal D^\bullet$, respectively; notice that all other
$d_1$-differentials must vanish.

The $E_2$-term of this spectral sequence will have the same shape:
\begin{itemize}
\item[(i)] $E_2^{p,0}=0$ for $-n\le p\le0$;
\item[(ii)] $E_2^{p,q}=0$ for $-n\le p\le0$ and $0\le q\le n-2$;
\item[(iii)] $E_2^{p,n-1}=0$ for $p\ne 1-n$,
\end{itemize}
while the only nontrivial $E_2$-terms are:
\begin{itemize}
\item[(i)] $E_1^{p,1}={\mathcal H}^p({\mathcal C}^\bullet)$;
\item[(ii)] $E_1^{1-n,n-1}:=H^{n-1}(\mathcal{F}(-n-1))\otimes\Omega^{n-1}(n-1)\simeq\Omega^{n-1}(n-1)^{\oplus a}$;
\item[(iii)] $E^{p,n}={\mathcal H}^p({\mathcal D}^\bullet)$.
\end{itemize}

It is then easy to see that all $d_2$-differentials
$d_2^{p,q}:E_2^{p,q}\to E_2^{p+2,q-1}$ must be zero, so the spectral
sequence converges already at the $E_2$-term, i.e.
$E_{\infty}^{p,q}=E_2^{p,q}$.

It follows from Beilinson's theorem that ${\mathcal H}^{p}({\mathcal
C}^\bullet)=0$ for $p\ne-1$, ${\mathcal H}^p({\mathcal
D}^\bullet)=0$ for $p>-n$ and
$$
\oplus_p E_{\infty}^{p,-p}= \Omega^{n-1}(n-1)^{\oplus
a}\oplus{\mathcal H}^{-1}({\mathcal C}^\bullet)\oplus{\mathcal
H}^{-n}({\mathcal D}^\bullet)
$$
is the graded sheaf associated to a filtration of $\mathcal{F}(-2)$.
In particular, it follows that the rank of $\mathcal{F}$ must be at
least equal to the rank of $\Omega^{n-1}(n-1)^{\oplus a}$, as
desired.
\end{proof}

Now let $\F$ be a locally free distribution of codimension one on an
odd dimensional projective space $\PP^{2m+1}$; the case $m=1$ of
locally free distribution of codimension one on $\PP^{3}$ is proved
by Giraldo and Pan-Collantes in \cite{GP}.

Thus we set $n=2m+1\ge5$. If $\singf$ is ACM, we get from the second
part of Lemma \ref{lemhum}, that $H^q(\F(p))=0$ for all $2\le q\le
n-1$ and all $p$, except for $q=n-1$ and $p=-n-1$, in which case
$h^{n-1}(\F(-n-1))\le1$.

We are then left with two possibilities. If $h^{n-1}(\F(-n-1))=0$,
it follows that $H^q(\F(p))=0$ for all $2\le q\le n-1$ and all $p$;
applying Serre duality, we conclude that $H^{q}(\F^*(p))=0$ for
$1\leq p\leq n-2$ and all $p$, thus $\F^*$ splits by the
Evans--Griffith spliting criterion, Theorem \ref{EG}, and so does
$\F$.

The second possibility, $h^{n-1}(\F(-n-1))=1$, leads to a
contradiction: by Lemma \ref{lemhum}, we get that $\F$ satisfies all
the hypotheses of Lemma \ref{B} with $a=1$; it then follows from
that $\F$ must have rank at least $n$, which contradicts the
hypothesis that $\F$ has rank $n-1$.

This completes the proof of Theorem \ref{2} in the odd dimensional
case.


\section{complete intersections holomorphic foliations} \label{ci}

Let $\fol_1,\dots,\fol_k$ be generically transversal holomorphic
distributions of codimension one on $\pn$. Then $\fol=\fol_1\cap
\dots\cap \fol_k$ is a foliation of codimension $k$  on $\pn$.
Therefore, the Pfaff bundle $\E$ of $\fol$ splits, see \cite[Cor.
4.2.7 pg. 133]{J}. In this case we say that $\fol$ is a\emph{
complete intersection}   foliation.

Assuming $\dim(\fol)=1$, we have the sequence
\begin{equation}
0 \lra \E\lra \Omega_{\pn}^1 \lra
\mathcal{I}_Z(c_1(\Omega_{\pn}^1)-c_1(\E)) \lra 0.
\end{equation}
Set $s:=c_1(\E)$. Taking determinant we get a global section of
$$
\Omega_{\pn}^{n-1}\otimes \det \E^*\simeq
\mathcal{T}_{\pn}\otimes\mathcal{O}_{\pn}(-n-1-s)).
$$
Then $s=-n-d$ and $c_1(\Omega_{\pn}^1)-c_1(\E)=-n-1+d+n=d-1.$
Therefore we obtain
\begin{equation}
\label{equpff} 0 \lra \E\lra \Omega_{\pn}^1 \lra \mathcal{I}_Z(d-1)
\lra 0.
\end{equation}

We can use  the result of \cite{AC} to show that complete
intersection foliations of dimension one is determined by their
singular schemes.

\begin{teo}
\label{determination} Let $\fol$ be an one-dimensional complete
intersection foliation on $\pn$, of degree $d$, such that
$\mathrm{cod}(\mathrm{Sing}(\fol))=2$. If $\fol'$ is a
one-dimensional foliation on $\pn$, of degree $d$, with
$\mathrm{Sing}(\fol)\subset \mathrm{Sing}(\fol')$, then
$\fol'=\fol$.
\end{teo}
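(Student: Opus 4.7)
The plan is to reduce the statement to a linear-algebraic condition on vector fields and then appeal to the determinacy result of \cite{AC}. Let $X, X' \in H^0(\tpn(d-1))$ be the twisted vector fields defining $\fol$ and $\fol'$; since both are one-dimensional foliations of the same degree $d$, the conclusion $\fol' = \fol$ is equivalent to $X' \in \C \cdot X$. The hypothesis $\singf \subset \mathrm{Sing}(\fol')$ says exactly that $X'$ vanishes scheme-theoretically on $Z := \singf$, so in particular $X' \in H^0(\mathcal{I}_Z \otimes \tpn(d-1))$.

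The complete intersection hypothesis on $\fol$ gives the Pfaff sequence \eqref{equpff},
\[
0 \to \E \to \Omega^1_{\pn} \to \mathcal{I}_Z(d-1) \to 0,
\]
with $\E = \bigoplus_{i=1}^{n-1}\mathcal{O}_{\pn}(-2-d_i)$. Because $Z$ has pure codimension two and is locally Cohen--Macaulay (as the expected-codimension degeneracy locus of a map of locally free sheaves), one has $\mathcal{H}om(\mathcal{I}_Z, \mathcal{O}_{\pn}) = \mathcal{O}_{\pn}$. Dualizing the Pfaff sequence and twisting by $\mathcal{O}_{\pn}(d-1)$ yields
\[
0 \to \mathcal{O}_{\pn} \xrightarrow{X} \tpn(d-1) \to \E^{\vee}(d-1) \to \mathcal{E}xt^1(\mathcal{I}_Z, \mathcal{O}_{\pn}) \to 0,
\]
whose leftmost nontrivial morphism is precisely $X$. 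Breaking this into two short exact sequences and passing to global sections, one sees that $X' \in \C \cdot X$ if and only if the image of $X'$ in $H^0(\E^{\vee}(d-1)) = \bigoplus_i H^0(\mathcal{O}_{\pn}(d + d_i + 1))$ vanishes; this image is the tuple $(i_{X'}\omega_i)_i$ of polynomial contractions, where the $\omega_i$ are the $1$-forms realizing $\fol$ as an intersection $\fol = \bigcap_i \fol_i$ of codimension-one distributions.

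It then remains to show that each $i_{X'}\omega_i$ vanishes identically. By construction these polynomials already vanish on $Z$ (because $X'$ does) and on $\mathrm{Sing}(\fol_i)$ (because $\omega_i$ does); to promote this subscheme-level vanishing to global vanishing I will invoke the determinacy result of \cite{AC}, which in the complete intersection setting forces any twisted vector field vanishing on $\singf$ to be a scalar multiple of the original one. Combined with the previous paragraph this produces $X' = \lambda X$ for some $\lambda \in \C$, and hence $\fol' = \fol$. The main obstacle is precisely this last step: a priori, a polynomial of the relevant degree may vanish on a codimension-two subscheme without being identically zero, and the extra rigidity needed to rule this out comes from the complete intersection structure (equivalently, from the arithmetically Buchsbaum property of $Z$ provided by Theorem \ref{3}(i)). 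Executing this lift from scheme-theoretic vanishing to global vanishing is the heart of the argument and is where \cite{AC} does the essential work.
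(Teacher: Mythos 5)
Your reduction to the statement ``$X'\in\C\cdot X$'' and the dual Pfaff sequence computation are fine as far as they go, but the argument stalls exactly where you say it does, and the way you propose to get past that point is circular. You write that to promote the vanishing of the contractions $i_{X'}\omega_i$ on $Z$ to identical vanishing you will ``invoke the determinacy result of \cite{AC}, which in the complete intersection setting forces any twisted vector field vanishing on $\singf$ to be a scalar multiple of the original one.'' That sentence is verbatim the theorem you are trying to prove: if you may quote it in that form, the entire dual-sequence reduction is superfluous, and if you may not, nothing in your write-up closes the gap. The result of \cite{AC} is a general statement about sections vanishing on the degeneracy scheme of a morphism of vector bundles, and it applies only under a cohomological hypothesis that you never state or verify.

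What the paper actually does, and what is missing from your proposal, is precisely that verification. One realizes $Z$ as the degeneracy scheme of the dual map $\tpn\to\E^*=\bigoplus_{i}\opn(d_i+2)$ and $\fol$ as the section $\zeta_{\fol}\in H^0(\tpn(d-1))$ obtained by taking determinants; the hypothesis of \cite[Thm.\ 1.1]{AC} in this situation is the vanishing
\[
H^1\Bigl(\Omega^{n-1}_{\pn}\otimes\bigwedge^{n}\tpn\otimes\bigoplus_{i=1}^{n-1}\opn(-d_i-2)\Bigr)=0,
\]
which holds term by term by Bott's formula \emph{because} $\E$ splits --- this is the only place the complete intersection hypothesis enters. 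Together with ${\rm End}(\Omega^{n-1}_{\pn})\cong\C$, this forces $\zeta_{\fol'}=\lambda\zeta_{\fol}$ and hence $\fol'=\fol$. So the missing ingredient is not some extra rigidity of $Z$ (the arithmetically Buchsbaum property you point to plays no role in the argument); it is the Bott-type $H^1$ vanishing that licenses the application of \cite{AC}, and without it your final step does not go through.
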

\begin{proof}
 Since the Pfaff bundle splits, write $\E=\bigoplus_{i=1}^{n-1}\mathcal{O}_{\pn}(-d_i-2)$ and consider
$$
0 \lra \bigoplus_{i=1}^{n-1} \mathcal{O}_{\pn}(-d_i-2)\lra
\Omega_{\pn}^1 \lra \mathcal{I}_Z(d-1) \lra 0.
$$
We have that $\singf$ is the degeneracy scheme of the induced dual
map
$$
\mathcal{T}_{\mathbb{P}^n} \to  \bigoplus_{i=1}^{n-1}
\mathcal{O}_{\pn}(d_i+2).
$$
Taking determinant we obtain a global section $\zeta_{\fol}\in
H^0(\mathcal{T}_{\pn}(d-1) )$ inducing $\fol$, where
$d-1=\sum_{i=1}^{n-1} (d_i+2)-n-1$. By Bott's formulae,
$$
H^1\Big(
\Omega^{n-1}_{\mathbb{P}^n}\otimes\bigwedge^{n}\mathcal{T}_{\mathbb{P}^n}\otimes
\bigoplus_{i=1}^{n-1} \mathcal{O}_{\pn}(-d_i-2)\Big)=
\bigoplus_{i=1}^{n-1} H^1\Big(
\Omega^{n-1}_{\mathbb{P}^n}\otimes\bigwedge^{n}T_{\mathbb{P}^n}(-d_i-2)\Big)=0.
$$
Thus, from \cite[Thm 1.1]{AC}, if $\zeta_{\fol'}\in
H^0(\mathcal{T}_{\pn}(d-1) )$ induces a one-dimensional foliation
$\fol'$ of degree $d$ with
$\mathrm{Sing}(\fol)=\mathrm{Sing}(\zeta_{\fol})\subset
\mathrm{Sing}(\zeta_{\fol'})=\mathrm{Sing}(\fol')$, then we must
have $\zeta_{\fol'}=\lambda \zeta_{\fol}$ for some $\lambda\in
\mathbb{C}^*$, since ${\rm End}(\Omega^{n-1}_{\PP^n})\cong
\mathbb{C} $  (cf. \cite[Lem 4.8]{AC}). Hence $\fol=\fol'$ as we
wish.
\end{proof}

\begin{teo} \label{complete inters. 1}
Let $\fol$ be an one-dimensional complete intersection foliation on
$\pn$, of degree $d$, such that
$\mathrm{cod}(\mathrm{Sing}(\fol))=2$. Suppose that the induced
Pfaff system  $\E \rightarrow \Omega_{\pn}^1$ is locally free. Then
$\E$ splits if and only if $\singf=Z$ is arithmetically Buchsbaum
with $h^1(\mathcal{I}_Z(d-1))=1$ being the only nonzero intermediate
cohomology for $H^i_*(\mathcal{I}_Z)$ in the range $ 1\leq i\leq
n-2.$
\end{teo}

\begin{proof}
It follows from  Theorem  \ref{aB codim 2} that  $\singf$ is
arithmetically Buchsbaum  with $h^1(\mathcal{I}_Z(d-1))=1$ being the
only nonzero intermediate cohomology for $\mathcal{I}_Z$ if and only
if the ideal sheaf $\mathcal{I}_Z$ has a  resolution of the form
$$
0 \lra \bigoplus_{i=1}^{n-1} \mathcal{O}_{\pn}(-b_i)\lra
\Omega_{\pn}^1(1-d) \lra \mathcal{I}_Z \lra 0.
$$
By (\ref{equpff}) we have that $\E=
\bigoplus_{i=1}^{n-1}\mathcal{O}_{\pn}(-b_i).$
\end{proof}

We will use the characterization of Theorem \ref{complete inters. 1}
and the  Theorem \ref{determination}  to classify complete
intersection foliations by curves whose singular locus is smooth and
non-general type.
\begin{prop}
Let $\fol$ be an one-dimensional complete intersection foliation on
$\pn$($n\geq 4$), of degree $d$, such that
$\mathrm{cod}(\mathrm{Sing}(\fol))=2$. If $\singf$ is smooth  and of
nongeneral type, then the classification of $\fol$ can be stated as:
$$
\begin{tabular}{|c |c|c|c|}
  \hline
 $n$   & $\deg(\fol)$ & $\E$ & $\singf$  \\ \hline
 $4$    & $2$ & $\mathcal{O}_{\PP^4}(-2)^{\oplus 3}$ & \emph{smooth projected Veronese surface} \\ \hline
 $4$    & $3$ & $\mathcal{O}_{\PP^4}(-2)^{\oplus
2}\oplus\mathcal{O}_{\PP^4}(-3)$ &  \emph{$K3$ surface of genus $7$} \\
\hline
 $5$    & $3$ & $\mathcal{O}_{\PP^5}(-2)^{\oplus 4}$ &  \emph{a scroll over a plane cubic surface} \\ \hline
 $5$    & $4$ &  $\mathcal{O}_{\PP^5}(-2)^{\oplus
3}\oplus\mathcal{O}_{\PP^5}(-3)$ & $\mathbb{P}(R_2)\cap
Bl_{\mathbb{P}^2}\mathbb{P}^8$ \\ \hline
\end{tabular}
$$
where
$$
R_2:=\bigg\{v_i\wedge v_i \wedge\tau \ \bigg|\  \ v_i\in
\mathbb{C}^6,\ \tau \in \bigwedge^2\mathbb{C}^6 \bigg\}
$$
$$
\mathbb{P}(R_2)\subset
\mathbb{P}\bigg(\bigwedge^2\mathbb{C}^6\otimes
\mathcal{O}_{\mathbb{P}^5}\bigg)=\mathbb{P}^{14}\times
\mathbb{P}^5$$ and
$$Bl_{\mathbb{P}^2} \mathbb{P}^8 \subset \mathbb{P}^{8}\times \mathbb{P}^5 \subset \mathbb{P}^{14}\times \mathbb{P}^5.$$
\end{prop}
\begin{proof}
Since $\fol$ is a  complete intersection foliation, then
$\mathrm{Sing}(\fol)=Z$ has a  resolution
$$
0 \lra \bigoplus_{i=1}^{n-1} \mathcal{O}_{\pn}(-1-d-a_i)\lra
\Omega_{\pn}^1(1-d) \lra \mathcal{I}_Z \lra 0.
$$
Now, the result follows  from Chang's classification of smooth
arithmetically Buchsbaum schemes \cite[Prp 1.4, Tables I and
IV]{Ch2} along with the fact just stated that complete intersection
foliations by curves are determined by their singular loci(Theorem
\ref{determination}).
\end{proof}

The above proposition  shows  that if $n\geq 4$ and $\singf$ is
smooth  and of nongeneral type, then  $2\leq \deg(\fol)\leq 4$ and
$n=4,5$. In particular, if $\fol$ is a complete intersection
foliation on $\mathbb{P}^3$ such that $\singf$ is nongeneral type,
then $\singf$ is not smooth.

We give the following example of a complete intersection foliation
on $\mathbb{P}^3$ whose the singular set is not smooth and
nongeneral type arithmetically Buchsbaum curve.
\begin{exe}
Let $\fol_1$ and $\fol_2$ be one-codimensional foliations on
$\mathbb{P}^3$ given, respectively, by the pencils $\{\alpha
z_0+\beta z_1=0\}$ and $\{\lambda z_3+\mu z_4=0\}$. Then $\fol_1$
and $\fol_2$ are  induced, respectively, by  the $1$-forms
$\omega_1=z_{0}dz_1-z_{1}dz_{0}$ and
$\omega_2=z_{3}dz_4-z_{4}dz_{3}$. We have that the complete
intersection $\fol=\fol_1\cap\fol_2$, of degree one,  is given by
$$
\omega=\omega_1\wedge\omega_2=z_0z_2dz_1 \wedge dz_3- z_0z_3dz_1
\wedge dz_2 -z_1z_2dz_0\wedge  dz_3+z_1z_3dz_0\wedge dz_2
$$
with $\mathrm{Sing}(\fol)=\{z_0=z_1=0\}\cup\{ z_2=z_3=0\}$. Then
$\mathrm{Sing}(\fol)$ is an  arithmetically Buchsbaum curve which is
not smooth and nongeneral type with
$\E=\mathcal{O}_{\PP^3}(-2)^{\oplus 2}$.
\end{exe}

\begin{teo} \label{complete inters. 2}
Let $\fol$ be a holomorphic  distribution of dimension $r$ on $\pn$.
Suppose that
 $\cod(\singf)=r+1$ and that the induced  Pfaff system  $\E
\rightarrow \Omega_{\pn}^1$ is locally free. Then the following
hold:
\begin{itemize}
\item[(i)] if $r=2$ and $\E$ splits, then $\singf$ is arithmetically Buchsbaum, but not arithmetically Cohen Macaulay;
\item[(ii)] if $r=3$, $\E$ splits, the $|d_i-d_j| \neq 1$, and, for $n\geq 7$, the $d_i\neq 1$ as well,
then $\singf$ is arithmetically Buchsbaum, but not arithmetically
Cohen Macaulay.
\end{itemize}
\end{teo}

\begin{proof}
Assuming the Pfaff bundle splits, write
$\E=\oplus_{i}\mathcal{O}_{\pn}(a_i)$, $c:=\sum_{i}a_i$ and set
$Z:=\singf$.

To prove (ii), the Eagon-Northcott complex obtained from
$\mathcal{T}_{\pn}\to\E^*$ is
\begin{equation}
\label{equenc} 0 \lra \bigwedge^n\mathcal{T}_{\pn}\otimes
S_{2}(\E)(c)\lra\bigwedge^{n-1}\mathcal{T}_{\pn}\otimes\E(c)\lra
\bigwedge^{n-2}\mathcal{T}_{\pn}(c)\stackrel{\alpha}\lra\mathcal{I}_{Z}\lra
0.
\end{equation}
Setting $\mathcal{S}:=S_2(\E)=\sum_{i,j}\mathcal{O}_{\pn}(a_ia_j)$
and twisting (\ref{equenc}) by $\mathcal{O}_{\pn}(q)$ we get
$$
0 \lra
\mathcal{S}(n+1+c+q)\lra\bigoplus_{i=1}^{n-2}\bigg(\bigwedge^{n-1}\mathcal{T}_{\pn}(a_i+c+q)\bigg)\lra
\bigwedge^{n-2}\mathcal{T}_{\pn}(c+q)\lra\mathcal{I}_{Z}(q)\lra 0.
$$
which breaks down into the short exact sequences
\begin{equation}
\label{equse1}
 0 \lra \mathcal{S}(n+1+c+q)\lra\bigoplus_{i=1}^{n-2}\bigg(\bigwedge^{n-1}\mathcal{T}_{\pn}(a_i+c+q)\bigg) \lra
\ker\alpha(q) \lra 0
\end{equation}
\begin{equation}
\label{equse2}
   0 \longrightarrow  \ker\alpha(q)  \longrightarrow
\bigwedge^{n-2}\mathcal{T}_{\pn}(c+q)\lra\mathcal{I}_{Z}(q)
\longrightarrow  0.
\end{equation}
Now (ref{equse1}) yields
$$
H^1(\ker\alpha(q))\cong
\bigoplus_{i=1}^{n-2}H^1\bigg(\bigwedge^{n-1}\mathcal{T}_{\pn}(a_i+c+q)\bigg)\
\ \text{for}\ q\in\mathbb{Z}
$$
\begin{equation}
\label{equcd2} H^p(\ker\alpha(q))\cong
\bigoplus_{i=1}^{n-2}H^p\bigg(\bigwedge^{n-1}\mathcal{T}_{\pn}(a_i+c+q)\bigg)=0\
\ \text{for}\ q\in\mathbb{Z},\ 2\leq p\leq n-2
\end{equation}
while from (\ref{equse2}) we get
$$
H^p(\ker\alpha(q))  \lra
H^p\bigg(\bigwedge^{n-2}\mathcal{T}_{\pn}(c+q)\bigg)\lra
H^p(\mathcal{I}_{Z}(q))\lra
$$
$$
\lra  H^{p+1}(\ker\alpha(q))  \lra
H^{p+1}\bigg(\bigwedge^{n-2}\mathcal{T}_{\pn}(c+q)\bigg).
$$
Now, for $p=1$ or $3\leq p\leq n-3=\dim\, Z$, we have that
$H^p(\wedge^{n-2}\mathcal{T}_{\pn}(c+q))$ vanishes for every
$q\in\mathbb{Z}$ and so does $H^{p+1}(\ker\alpha(p))$ from
(\ref{equcd2}). It follows that
\begin{equation}
\label{equab1} H^p(\mathcal{I}_Z(q))=0\ \ \text{for}\
q\in\mathbb{Z}\ \text{and}\ p=1\ \text{or}\ 3\leq p\leq n-3
\end{equation}
From (\ref{equcd2}) again, $H^3(\ker\alpha(q))=0$ because $n\geq 5$.
Therefore
$$
H^2\bigg(\bigwedge^{n-2}\mathcal{T}_{\pn}(c+q)\bigg)\cong
H^2(\mathcal{I}_{Z}(q))
$$
and hence
\begin{equation}
\label{equab2} H^2(\mathcal{I}_{Z}(q))=0\ \text{for}\ q\neq -c-n-1.
\end{equation}
From (\ref{equab1}) and (\ref{equab2}) one rapidly sees that $Z$ is
arithmetically Buchsbaum though not ACM.

To prove (iii),  set $\mathcal{S}:=S_2(\E)$ and
$\mathcal{S}':=S_3(\E)$. Twisting the Eagon-Northcott complex
obtained from $\mathcal{T}_{\pn}\to\E^*$ by $\mathcal{O}_{\pn}(q)$
we get
$$
0 \lra
\mathcal{S}'(n+1+c+q)\lra\bigwedge^{n-1}\mathcal{T}_{\pn}\otimes\mathcal{S}(c+q)\lra
\bigwedge^{n-2}\mathcal{T}_{\pn}\otimes\E(c+q)\lra
$$
$$
\stackrel{\alpha_q'}\lra\bigwedge^{n-3}\mathcal{T}_{\pn}(c+q)\stackrel{\alpha_q}\lra\mathcal{I}_{Z}(q)\lra
0
$$
which breaks down into the short exact sequences
\begin{equation}
\label{equse3}
 0 \lra \mathcal{S}'(n+1+c+q)\lra\bigwedge^{n-1}\mathcal{T}_{\pn}\otimes\mathcal{S}(c+q)\lra
\ker\alpha_q' \lra 0
\end{equation}
\begin{equation}
\label{equse4}
   0 \longrightarrow  \ker\alpha_q'  \longrightarrow
\bigwedge^{n-2}\mathcal{T}_{\pn}\otimes\E(c+q)\lra\ker\alpha_q
\longrightarrow  0
\end{equation}
\begin{equation}
\label{equse5}
   0 \longrightarrow  \ker\alpha_q  \longrightarrow
\bigwedge^{n-3}\mathcal{T}_{\pn}(c+q)\lra\mathcal{I}_{Z}(q)
\longrightarrow  0
\end{equation}
Now (\ref{equse5}) yields
\begin{equation}
\label{equcd3} H^p(\mathcal{I}_Z(q))\cong H^{p+1}(\ker\alpha_q)\ \
\text{for}\ q\in\mathbb{Z},\ 1\leq p\leq n-2\ \text{but}\ p=2,3
\end{equation}
and also the exact sequence
\begin{equation}
\label{equse6} 0\lra H^2(\mathcal{I}_{Z}(q))\lra H^3(\ker\alpha_q)
\lra H^3\bigg(\bigwedge^{n-3}\mathcal{T}_{\pn}(c+q)\bigg)\lra
\end{equation}
$$
\lra H^3(\mathcal{I}_{Z}(q))\lra H^4(\ker\alpha_q)
$$
while from (\ref{equse4}) we get
\begin{equation}
\label{equcd4} H^{p+1}(\ker\alpha_q)\cong H^{p+2}(\ker\alpha_q')\ \
\text{for}\ q\in\mathbb{Z},\ 0\leq p\leq n-3\ \text{but}\ p=0,1
\end{equation}
and (\ref{equse3}) leads to
\begin{equation}
\label{equcd5} H^{p}(\ker\alpha_q')=0\ \ \text{for}\
q\in\mathbb{Z},\ 3\leq p+1\leq n-1
\end{equation}
thus
\begin{equation}
\label{equcd6} H^{p}(\ker\alpha_q)=0\ \ \text{for}\ q\in\mathbb{Z},\
3\leq p\leq n-1
\end{equation}
hence from (\ref{equcd3}) we get
\begin{equation}
\label{equfn1} H^{p}(\mathcal{I}_Z(q))=0\ \ \text{for}\
q\in\mathbb{Z},\ 1\leq p\leq n-4\ \text{but}\ p=1,3.
\end{equation}
Besides, (\ref{equse4}) and (\ref{equcd3}) yields
$$
H^1(\mathcal{I}_Z(q))\cong H^{2}(\ker\alpha_q)\cong
H^2\bigg(\bigwedge^{n-2}\mathcal{T}_{\pn}\otimes\E(c+q)\bigg)
$$
so
$$
H^1(\mathcal{I}_Z(q))\cong\bigoplus_{i=1}^{n-3}H^2\bigg(\bigwedge^{n-2}
\mathcal{T}_{\pn}(a_i+c+q)\bigg)
$$
which implies
\begin{equation}
\label{equfn2} H^1(\mathcal{I}_Z(q))=0\ \text{for}\ q\in\mathbb{Z}\
\text{but}\ q=-n-1-c-a_i
\end{equation}
On the other hand, from (\ref{equse6}) and (\ref{equcd5}) we have
$$
H^3(\mathcal{I}_Z(q))\cong
H^3\bigg(\bigwedge^{n-3}\mathcal{T}_{\pn}(c+q)\bigg)
$$
so
\begin{equation}
\label{equfn3} H^3(\mathcal{I}_Z(q))=0\ \text{for}\ q\in\mathbb{Z}\
\text{but}\ q=-n-1-c.
\end{equation}
Now gather (\ref{equfn1}), (\ref{equfn2}), (\ref{equfn3}) and recall
Proposition \ref{arith Buchsbaum}. If $|a_i-a_j|\neq 1$ its first
condition is trivially satisfied. For the second, if $\dim\,Z\geq
3$, i.e., $n\geq 7$, we just need
$$
(1-n-1-c-a_i)-(3-n-1-c)=-2-a_i\neq 1,
$$
that is, $a_i\neq -3$, i.e., $d_i\neq 1$.
\end{proof}

\subsection{Proof of Theorem 2 }
Now we are able to prove Theorem 2, in the Introduction. The first
part of item (i) correspond to the  Theorem \ref{complete inters. 1}
and  the second part of (i) can be derived from \cite[Lem 1.1]{Ch2}.
Part (ii) is the Theorem \ref{determination}. Parts (ii) and (iii)
follows from Theorem \ref{complete inters. 2}.

\section{Distribution with (co)tangent sheaf  globally generated}

In this section we prove the Theorem 4.

\begin{prop}
\label{nonempty} If $\fol$ is a locally free distribution on
$\PP^n$, of dimension $r$,  such that the cotangent sheaf $\F^{*}$
is globally generated and ample, then $\singf$ is nonempty with pure
dimension $r-1$. This holds in particular if
$\F=\oplus_{i=1}^{r}\mathcal{O}_{\PP^n}(-d_i)$ with $d_i>0$ for all
$i$. Moreover
$$
\deg(\singf)= \deg(c_{n-r+1}(\mathcal{T}_{\mathbb{P}^n}-\F))\leq
d^{n-r+1}+d^{n-r}+ \cdots +d+1..
$$
\end{prop}
\begin{proof}
Since $\mathcal{ T}_{\PP^n}$ and $\F^{*}$ are globally generated and
ample  then $\mathcal{ T}_{\PP^n}\otimes \F^{*}$ is so. The
nonemptiness of $\singf$ follows from \cite{FL} and the expected
 dimension follows from Bertini type Theorem, since $\mathcal{ T}_{\PP^n}\otimes \F^{*}$ is globally generated.

On the other hand, it follows from    Kempf- Laksov theorem
\cite{KL} that
$$
c_{n-r+1}(\mathcal{T}_{\mathbb{P}^n}-\F)=[\singf]=
\displaystyle\sum_{j=1}^m [V_j],
$$
where $V_1,\ldots, V_m$  are the irreducible components of $\singf$.
The bound
$$
\deg\left(\displaystyle\sum_{j=1}^m [V_j]\right)  \leq
d^{n-r+1}+d^{n-r}+ \cdots +d+1.
$$
follows from \cite[Corolarry 4.8]{So}.
\end{proof}

\begin{prop}
\label{pullback} Let $\fol$ be  a foliation on $\PP^n$ of dimension
$r$ and degree $d$. Then  the tangent sheaf
$\F=\mathcal{O}_{\PP^n}(1-d)\oplus \mathcal{O}_{\PP^n}(1)^{\oplus
r-1}$ if and only if there exists a generic linear projection $\pi:
\PP^n\rightarrow\PP^{n-r+1} $ and a one-dimensional foliation $\G$
on $\PP^{n-r+1}$, of degree $d$, with isolated singularities, such
that $\fol=\pi^*\G$.
\end{prop}

\begin{proof}
We will use the  idea of proof of \cite[Cor. 5.1]{Cukierman-Pereira}
and \cite[Lem. 2.2]{CNLPT}. In fact,  suppose that
$\F=\mathcal{O}_{\PP^n}(1-d)\oplus \mathcal{O}_{\PP^n}(-1)^{\oplus
r-1}$ , then $\F$  is induced by an $r$-form $\omega$ that may be
written as
\[
 \omega= i_{X}i_{Z_1} \cdots i_{Z_{r-1}} i_R
 \Omega \, ,
\]
where the $X$ is a homogeneous vector field of degree $d$, the $Z_j$
are constant vector fields, $R$ is the radial vector field and
$\Omega$ is the canonical volume form of $\mathbb{C}^{n+1}$.

In suitable coordinate system $(z_0, \ldots, z_{n })$ of $\mathbb
C^{n+ 1}$ we can write
$$
Z_j =\frac{\partial}{\partial z_{n-r+1+j}}
$$
for $j=1,\dots,r-1$. It follows that the fibers of the linear
projection
$$
\pi(z_0,\ldots, z_{n}) = (z_0, \ldots, z_{n-r+1})
$$
are everywhere tangent to  the leaves of $\fol$. It follows from
\cite[Lem. 2.2]{CNLPT} that there exists a one-dimensional foliation
$\G$ such that $\F= \pi^* \G.$
\end{proof}

\begin{exe}
Let $\fol$ be  a foliation on $\PP^n$ of  dimension $r$ and degree
$d$. If tangent sheaf $\F=\mathcal{O}_{\PP^n}(1-d)\oplus
\mathcal{O}_{\PP^n}(1)^{\oplus r-1}.$ Then
$$
\deg(\singf)=d^{n-r+1}+ \cdots +d+1.
$$
This follows from proposition \ref{pullback}.
\end{exe}

A \emph{subdistribution} $\G$ of $\fol$ is a distribution whose
tangent sheaf $\G\subset\F$.

\begin{prop}
\label{global1} Let $\fol$ be a  locally free distribution on
$\PP^n$ of rank $r>2$, degree $d$,  that admits a locally free
subdistribution of rank and degree $r-1$. If $\F^*$ is globally
generated, then
$\F=\mathcal{O}_{\mathbb{P}^n}(r-d)\oplus\mathcal{O}_{\mathbb{P}^n}^{\oplus
r-1}$.
\end{prop}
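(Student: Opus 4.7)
\emph{Proof proposal.}  Let $\mathcal{G}\subset\F$ be the tangent sheaf of the subdistribution: it is locally free of rank $r-1$ and, since its rank equals its degree, has $c_1(\mathcal{G})=0$.  As $\mathcal{G}$ is saturated in $\tpn$ (its normal sheaf is torsion free), the inclusion $\F/\mathcal{G}\hookrightarrow\tpn/\mathcal{G}$ shows $\F/\mathcal{G}$ is torsion free of rank one with $c_1=r-d$; hence $\F/\mathcal{G}=\I_W(r-d)$ for some subscheme $W\subset\pn$ of codimension at least $2$ (possibly empty).  The plan is to show $W=\emptyset$, $\mathcal{G}\cong\opn^{\oplus r-1}$, and that the resulting extension splits.

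Dualizing $0\to\mathcal{G}\to\F\to\I_W(r-d)\to 0$ and using $\mathscr{E}xt^1(\F,\O)=0$ gives
\begin{equation*}
0\to\opn(d-r)\to\F^*\to\mathcal{G}^*\to\mathscr{E}xt^1(\I_W(r-d),\O)\to 0,
\end{equation*}
whose last term is supported on $W$.  Let $\mathcal{J}$ be the image of $\F^*\to\mathcal{G}^*$; it is globally generated (being a quotient of the globally generated $\F^*$), has rank $r-1$ and $c_1(\mathcal{J})=0$.  A generic choice of $r-1$ global sections produces $\opn^{\oplus r-1}\hookrightarrow\mathcal{J}$ with torsion cokernel of vanishing $c_1$, hence supported in codimension at least $2$; similarly $\mathcal{J}\hookrightarrow\mathcal{G}^*$ has cokernel supported in codimension at least $2$.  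Since $\opn^{\oplus r-1}$ and $\mathcal{G}^*$ are locally free and thus reflexive, a Hartogs-type reflexive-hull argument yields $\mathcal{G}^*=\mathcal{J}^{**}=\opn^{\oplus r-1}$.  Any globally generated subsheaf of $\opn^{\oplus r-1}$ of full rank equals the constant subsheaf generated by an $(r-1)$-dimensional subspace of $H^0(\opn^{\oplus r-1})=\mathbb{C}^{r-1}$, which by dimension count must be everything; so $\mathcal{J}=\opn^{\oplus r-1}$, the Ext sheaf vanishes, $W=\emptyset$, and $\F/\mathcal{G}=\opn(r-d)$.

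Finally, the short exact sequence $0\to\opn^{\oplus r-1}\to\F\to\opn(r-d)\to 0$ has extension class in $\mathrm{Ext}^1(\opn(r-d),\opn^{\oplus r-1})=H^1(\opn(d-r))^{\oplus r-1}$, which vanishes on $\pn$ since $n\geq r>2$ forces $n\geq 3$.  Hence the sequence splits, yielding $\F=\opn(r-d)\oplus\opn^{\oplus r-1}$.  The main obstacle is the reflexive-hull step that identifies $\mathcal{G}^*$ with $\opn^{\oplus r-1}$: this is precisely where the global generation of $\F^*$ enters decisively, through the generic sections of $\mathcal{J}$ needed to force $\mathcal{J}^{**}=\opn^{\oplus r-1}$, and in turn to rule out both the possible non-triviality of $\mathcal{G}^*$ and the existence of a nontrivial $W$.
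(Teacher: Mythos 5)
Your argument is correct, but it takes a genuinely different and more self-contained route than the paper's. Both proofs begin the same way: since the subdistribution $\G$ has rank equal to its degree, $c_1(\G)=0$, so the quotient $\F/\G$ has rank one and first Chern class $r-d$, and dualizing produces a nonzero morphism $\mathcal{O}_{\mathbb{P}^n}(d-r)\hookrightarrow\F^*$, i.e.\ $H^0(\F^*\otimes\mathcal{O}_{\mathbb{P}^n}(r-d))\neq0$. At that point the paper stops and invokes Proposition 1 of Sierra \cite{Si}: a globally generated bundle $E$ of rank $r$ with $H^0(E\otimes\det(E)^*)\neq0$ is isomorphic to $\det(E)\oplus\mathcal{O}^{\oplus r-1}$. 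You instead reprove the needed special case of that statement from scratch, via the normality/reflexive-hull sandwich $\mathcal{O}_{\mathbb{P}^n}^{\oplus r-1}\subseteq\mathcal{J}\subseteq\G^*$ with both cokernels supported in codimension at least two. What your approach buys is self-containedness, and you are also more careful than the paper in only assuming $\F/\G\cong\I_W(r-d)$ rather than taking it to be a line bundle from the start; what it costs is length. One step to tighten: the implication ``$\mathscr{E}xt^1(\I_W(r-d),\mathcal{O}_{\mathbb{P}^n})=0$ hence $W=\emptyset$'' is not true for an arbitrary codimension-$\geq2$ subscheme (for a point in $\mathbb{P}^3$ this $\mathscr{E}xt^1$ already vanishes); you must also use that $\F/\G$ admits the length-one locally free resolution $0\to\G\to\F\to\F/\G\to0$, which kills all $\mathscr{E}xt^{\geq2}$ and therefore makes $\F/\G$ invertible once $\mathscr{E}xt^1$ vanishes. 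Alternatively you can bypass $W$ altogether: once you know $\mathcal{J}=\G^*=\mathcal{O}_{\mathbb{P}^n}^{\oplus r-1}$, the exact sequence $0\to\mathcal{O}_{\mathbb{P}^n}(d-r)\to\F^*\to\mathcal{O}_{\mathbb{P}^n}^{\oplus r-1}\to0$ splits by the same $\mathrm{Ext}^1$ vanishing you use at the end, and dualizing gives the conclusion directly.
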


\begin{proof}
Let $\G$ be the locally free subsheaf of $\F$ of rank $r-1$. We have
an exact sequence
$$
0\rightarrow\G\rightarrow\F\rightarrow
\mathcal{O}_{\mathbb{P}^n}(c_1(\F)-c_1(\G))\rightarrow0.
$$
The distribution determined by $\G$ has degree $r-1$, then
$c_1(\G)=0$. Therefore there is a non trivial morphism
$\mathcal{O}_{\mathbb{P}^n}(d-r )\hookrightarrow \F^*$. This implies
that
$$
H^0(\F^*\otimes \mathcal{O}_{\mathbb{P}^n}(r-d))\neq 0.
$$
Since $\det (\F^*)=\mathcal{O}_{\mathbb{P}^n}(d-r)$, the result
follows from \cite[Prp. 1]{Si}
\end{proof}

\begin{remark}
\label{remprt} Consider a distribution on $\PP^n$ of rank $r>1$,
degree $r+1$, and such that the cotangent sheaf $\F^*$ is globally
generated. Then $\F= \mathcal{O}_{\mathbb{P}^n}(-1)\oplus
\mathcal{O}_{\mathbb{P}^n}^{\oplus r-1}.$ In fact, it follows from
\cite[p. 53]{Okonek} that $c_1(\F^*)=1$ iff $\F^*\simeq
\mathcal{O}_{\mathbb{P}^n}(1)\oplus
\mathcal{O}_{\mathbb{P}^n}^{\oplus r-1} $.
\end{remark}

\begin{prop}
\label{global2}
 Let $\fol$ be a distribution on $\PP^n$ of rank  $r$,
degree $d$, such that $\F^*$ is globally generated. If $c_r(\F)=0$
then $\fol$ admits a locally free subdistribution of rank $r-1$ and
degree $d-1$.
\end{prop}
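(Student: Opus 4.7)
The plan is to produce a nowhere-vanishing global section of $\F^*$; its dual will split off a trivial summand from $\F$, and the kernel of that surjection will supply the desired subdistribution.

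The key observation is that $c_r(\F^*)=(-1)^r c_r(\F)=0$. Since $\F^*$ is globally generated of rank $r$, a standard Bertini-type argument applied to the evaluation map $H^0(\F^*)\otimes\mathcal{O}_{\mathbb{P}^n}\to\F^*$ shows that the zero scheme $Z(s)$ of a general section $s\in H^0(\F^*)$ is either empty or of pure codimension $r$ in $\mathbb{P}^n$, with cohomology class equal to $c_r(\F^*)=0$. Since a nonempty pure-codimension-$r$ subscheme of $\mathbb{P}^n$ has strictly positive degree, this forces $Z(s)=\emptyset$. Hence $s$ fits in a short exact sequence
$$
0\to\mathcal{O}_{\mathbb{P}^n}\stackrel{s}{\to}\F^*\to\mathcal{Q}\to 0
$$
with $\mathcal{Q}$ locally free of rank $r-1$.

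Dualizing yields $0\to\G\to\F\to\mathcal{O}_{\mathbb{P}^n}\to 0$ with $\G:=\mathcal{Q}^*$ locally free of rank $r-1$. Combining this with $0\to\F\to\mathcal{T}_{\mathbb{P}^n}\to\nf\to 0$ gives an inclusion $\G\hookrightarrow\mathcal{T}_{\mathbb{P}^n}$ whose cokernel fits in
$$
0\to\mathcal{O}_{\mathbb{P}^n}\to\mathcal{T}_{\mathbb{P}^n}/\G\to\nf\to 0.
$$
Since an extension of torsion-free sheaves is torsion free, $\mathcal{T}_{\mathbb{P}^n}/\G$ is torsion free, so $\G$ is the tangent sheaf of a saturated subdistribution of $\fol$. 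From $c_1(\F)=r-d$ and the defining sequence of $\G$ one obtains $c_1(\G)=c_1(\F)=r-d=(r-1)-(d-1)$, so this subdistribution has rank $r-1$ and degree $d-1$, as required.

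The only genuinely delicate point is the first step, where one concludes from $c_r(\F^*)=0$ and global generation of $\F^*$ that a general section is nowhere vanishing; everything that follows is essentially formal once that is in hand.
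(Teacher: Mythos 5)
Your proof is correct and follows essentially the same route as the paper: the paper simply cites Okonek--Schneider--Spindler (Lem.\ 4.3.2) for the existence of the exact sequence $0\to\mathcal{O}_{\mathbb{P}^n}\to\F^*\to\G^*\to 0$, which is exactly the nowhere-vanishing-section step you prove by hand via the Bertini/degree argument, and then dualizes and computes $c_1$ as you do. Your additional check that $\mathcal{T}_{\mathbb{P}^n}/\G$ is torsion free is a welcome detail the paper leaves implicit.
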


\begin{proof}
It follows from \cite[Lem. 4.3.2]{Okonek} that there exists a bundle
$\G^*$ of rank $r-1$ and an exact sequence $0\rightarrow
\mathcal{O}_{\mathbb{P}^n}\rightarrow\F^*\rightarrow\G^*\rightarrow
0$. In particular, $\G$ is a subbundle of $\F$ and yields a locally
free subdistribution $\G$. Besides,
$$
r-\deg(\fol)=c_1(\F)=c_1(\G)=r-1-\deg(\G)
$$
and we are done.
\end{proof}

\begin{cor}\label{global trivial}
Let $\fol$ be a distribution on $\PP^n$ of rank and degree $r$, such
that $\F^*$ is globally generated. If $c_r(\F)=0$ then
$\F=\mathcal{O}_{\mathbb{P}^n}^{\oplus r}$.
\end{cor}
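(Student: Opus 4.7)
The plan is to chain Propositions \ref{global2} and \ref{global1} together, exploiting the numerical coincidence $d=r$. First, apply Proposition \ref{global2} to $\fol$: its hypotheses are exactly the ones given ($\F^*$ globally generated, $c_r(\F)=0$), and it produces a locally free subdistribution $\G$ of $\fol$ of rank $r-1$ and degree $d-1=r-1$.

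With this subdistribution in hand, Proposition \ref{global1} applies (for $r>2$) with $d=r$, producing
\[
\F \;=\; \mathcal{O}_{\PP^n}(r-d) \oplus \mathcal{O}_{\PP^n}^{\oplus r-1} \;=\; \mathcal{O}_{\PP^n}^{\oplus r},
\]
which is the desired conclusion. The chain closes cleanly because global2 outputs a subdistribution whose numerical invariants match precisely the input required by global1.

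The low-rank cases need a separate word but present no real obstacle. For $r=1$, $\F$ is a line bundle with $c_1(\F)=r-d=0$, so $\F=\mathcal{O}_{\PP^n}$. For $r=2$, Proposition \ref{global2} produces a sub-line-bundle $\G\subset\F$ of degree $0$, hence $\G=\mathcal{O}_{\PP^n}$; the short exact sequence $0\to\mathcal{O}_{\PP^n}\to\F\to\mathcal{O}_{\PP^n}\to 0$ from the argument of global1 then splits because $\mathrm{Ext}^1(\mathcal{O}_{\PP^n},\mathcal{O}_{\PP^n})=H^1(\mathcal{O}_{\PP^n})=0$. Thus the entire proof reduces to a one-line composition, and there is no genuinely hard step — all the substance lies in the two preceding propositions.
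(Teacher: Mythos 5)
Your proof is correct and is exactly the paper's argument: the paper's entire proof reads ``Combine Propositions \ref{global1} and \ref{global2}.'' Your extra care with the cases $r=1$ and $r=2$ (which Proposition \ref{global1} formally excludes since it assumes $r>2$) is a welcome refinement of the same approach, not a different one.
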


\begin{proof}
Combine Propositions \ref{global1} and \ref{global2}.
\end{proof}

Recall that the  twisted null-correlation bundle $\mathscr N(1)$ on
$\pn$, with $n$ odd, is defined by the short exact sequence (cf.
\cite[p. 77]{Okonek})
$$ 0 \to \mathscr N(1) \to \tpn \to \opn(2) \to 0
$$
and it is the tangent sheaf of the contact distribution on $\PP^n$.

In \cite{Si-U} J. C. Sierra and  L. Ugaglia  proved that a  globally
generated vector bundle $\mathcal{F}$ on $\pn$ such that ${\rm
rank}(\mathcal{F})<n$ and $c_1(\mathcal{F})=2$, always splits unless
$\mathcal{F}$ is a twisted null-correlation bundle on
$\mathbb{P}^3$. Using this we can prove the following.

\begin{prop}\label{split gerado pn}
Let $\fol$ be a distribution on $\PP^n$ of rank $r>1$, degree $r-2$,
and such that $\F$ is globally generated. Then\begin{itemize}
\item [(i)]$\F=
\mathcal{O}_{\mathbb{P}^n}(1)^{\oplus
2}\oplus\mathcal{O}_{\mathbb{P}^n}^{\oplus r-2}$;
  \item [(ii)]$\F$ is the  twisted null-correlation bundle on $\PP^3$.
\end{itemize}
In particular, if $\F$ is not split, then $n=3$, $\fol$ is not a
foliation and $\singf=\emptyset$.
\end{prop}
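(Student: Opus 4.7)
The plan is to compute $c_1(\F)$ from the numerical data of the distribution, invoke the Sierra--Ugaglia splitting theorem recalled just before the proposition, and then unpack each of the two resulting alternatives.

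First, I would use the relation $\det\nf\cong\O_{\PP^n}(d+k+1)$ recorded in the discussion leading to \eqref{equirw} (with codimension $k=n-r$), together with $c_1(\F)+c_1(\nf)=c_1(\tpn)=n+1$, to obtain $c_1(\F)=r-d$, which equals $2$ since $d=r-2$. Because every distribution has codimension at least one, $r\le n-1<n$, so with $r\ge 2$ I am in the range where Sierra--Ugaglia applies to the globally generated bundle $\F$.

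Sierra--Ugaglia then gives two alternatives. If $\F$ splits, global generation forces $\F=\bigoplus_{i=1}^r\O_{\PP^n}(a_i)$ with $a_i\ge 0$ and $\sum a_i=2$, leaving only the candidates $\O_{\PP^n}(1)^{\oplus 2}\oplus\O_{\PP^n}^{\oplus r-2}$ and $\O_{\PP^n}(2)\oplus\O_{\PP^n}^{\oplus r-1}$. To rule out the second option I would use the inclusion $\F\hookrightarrow\tpn$ and observe from the twisted Euler sequence that $H^0(\tpn(-2))=0$ for every $n\ge 2$, so $\tpn$ contains no line subsheaf of the form $\O_{\PP^n}(2)$; this establishes (i). If instead $\F$ does not split, Sierra--Ugaglia forces $n=3$, $r=2$, and $\F$ to be the twisted null-correlation bundle $\mathscr N(1)$, establishing (ii). Reading off the defining sequence $0\to\F\to\tpn\to\O_{\PP^3}(2)\to 0$, the local freeness of the quotient shows that $\F\hookrightarrow\tpn$ is pointwise injective, so its degeneracy locus---which coincides with $\singf$ by the discussion after \eqref{eta_f}---is empty. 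For non-integrability I would exhibit an explicit defining contact $1$-form on $\C^4$ and note that $\omega\wedge d\omega\ne 0$, so Frobenius fails and $\fol$ is not a foliation.

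The only genuine subtlety is the elimination of the $\O_{\PP^n}(2)\oplus\O_{\PP^n}^{\oplus r-1}$ possibility: one must remember that $\F$ is not an arbitrary globally generated bundle but a subsheaf of $\tpn$, and exploit that constraint. The rest is a straightforward assembly of the Sierra--Ugaglia classification with the standard properties of the contact structure on $\PP^3$.
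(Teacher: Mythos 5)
Your proposal follows essentially the same route as the paper: compute $c_1(\F)=r-d=2$, note $\rk(\F)=r\le n-1<n$, and invoke the Sierra--Ugaglia classification to get either a split bundle or the twisted null-correlation bundle on $\PP^3$. Your argument is in fact slightly more complete than the paper's one-line proof, since you explicitly eliminate the split candidate $\mathcal{O}_{\mathbb{P}^n}(2)\oplus\mathcal{O}_{\mathbb{P}^n}^{\oplus r-1}$ via $H^0(\tpn(-2))=0$ (a step the paper leaves implicit) and you verify the emptiness of $\singf$ and the non-integrability in the null-correlation case.
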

\begin{proof}
It follows from the J. Sierra and L. Ugaglia classification
\cite{Si-U} and the fact that $c_1(\F)=2$.
\end{proof}

\subsection{Proof of Theorem \ref{1} }

Item (i) is part of Proposition \ref{nonempty} while (ii)
 corresponds to Proposition \ref{pullback}. Besides, Proposition \ref{global1}, Remark \ref{remprt} and Corollary \ref{global trivial} prove
 (iii), and (iv) is the conclusion
of Proposition \ref{split gerado pn}.

\end{document}